\newcommand{\eps}{\varepsilon}
\renewcommand{\phi}{\varphi}
\newcommand{\N}{\mathbb{N}}
\newcommand{\R}{\mathbb{R}}
\newcommand{\Rbar}{\overline{\mathbb{R}}}
\newcommand{\calF}{\mathcal{F}}
\newcommand{\calG}{\mathcal{G}}
\newcommand{\calJ}{\mathcal{J}}
\newcommand{\calS}{\mathcal{S}}
\newcommand{\calL}{\mathcal{L}}
\newcommand{\scalprod}[1]{\langle #1 \rangle}
\newcommand{\norm}[1]{\| #1 \|}
\newcommand{\set}[2]{\left\{#1:#2\right\}}
\newcommand{\wkto}{\rightharpoonup}
\DeclareMathOperator{\dom}{\mathrm{dom}}
\DeclareMathOperator{\Id}{\mathrm{Id}}
\pgfplotsset{compat=newest}
\pgfplotsset{plot coordinates/math parser=false}
\title{A convex analysis approach to multi-material topology optimization}
\author{Christian Clason\thanks{Faculty of Mathematics, University Duisburg-Essen, 45117 Essen, Germany (\email{christian.clason@uni-due.de})}
    \and Karl Kunisch\thanks{Institute of Mathematics and Scientific Computing, University of Graz, Heinrichstrasse 36, 8010 Graz, Austria, and Radon Institute, Austrian Academy of Sciences, Linz, Austria
    (\email{karl.kunisch@uni-graz.at}).}
}
\date{January 14, 2016}
\begin{document}

\maketitle
\allowdisplaybreaks

\begin{abstract}
    This work is concerned with optimal control of partial differential equations where the control enters the state equation as a coefficient and should take on values only from a given discrete set of values corresponding to available materials. A ``multi-bang'' framework based on convex analysis is proposed where the desired piecewise constant structure is incorporated using a convex penalty term. Together with a suitable tracking term, this allows formulating the problem of optimizing the topology of the distribution of material parameters as minimizing a convex functional subject to a (nonlinear) equality constraint. The applicability of this approach is validated for two model problems where the control enters as a potential and a diffusion coefficient, respectively. This is illustrated in both cases by numerical results based on a semi-smooth Newton method.
\end{abstract}

\section{Introduction}
\label{sec:introduction}

In this work, topology optimization
consists in determining the optimal distribution of two or more given materials within a domain, where the material properties enter as the values of a spatially varying coefficient $u(x)$ into the operator of a  partial differential equation.
We propose to follow a direct approach and minimize a cost functional of interest subject to the constraint $u(x)\in\{u_1,\dots,u_d\}$, where $u_i$ are given parameters specific to different materials.  This constraint is realized by means of the penalty functional
\begin{equation}
    \calG_0(u) = \int_\Omega \frac\alpha{2} |u(x)|^2 + \beta \prod_{i=1}^d |u(x)-u_i|^0 \,dx,
\end{equation}
where $|0|^0 = 0$ and $|t|^0 = 1$ for $t\neq 0$, and $\alpha$ and $\beta$ are fixed parameters to be further discussed below (see \cref{cor:subdiff}).
This functional was analyzed  in \cite{CK:2013} in the context of linear optimal control problems. There it was shown that, under mild technical assumptions,  the solutions to  optimal control problems based on the
convex envelope $\calG_\Gamma$ of $\calG_0$ have the desired property of being exactly multi-bang.  This means  that the solutions assume values in $\{u_1,\dots, u_d\}$ pointwise a.e. in the control domain, provided that $\beta$ is sufficiently large. This property is related to the use of the $\ell^1$ norm in sparse optimization as the convex envelope (on the unit interval) of the $\ell^0$ ``norm''. Although the explicit form of $\calG_\Gamma$ is not needed in our approach, we compute it in \cref{sec:l1} and remark on its relation to a direct $L^1$-type penalization of the constraint $u(x)\in\{u_1,\dots,u_d\}$.

In this work, we focus on  tracking-type functionals for multi-material optimization, i.e., we consider the optimization problem
\begin{equation}
    \label{eq:formal_prob}
    \min_{u\in U} \frac12 \norm{S(u)-z}_Y^2 + \calG_\Gamma(u),
\end{equation}
where
\begin{equation}
    U=\set{u\in L^2(\Omega)}{u(x)\in [u_1,u_d]\quad \text{for almost all } x\in\Omega}
\end{equation}
is the admissible set with $u_1<\dots<u_d$ given, $Y$ is a Hilbert space, $z\in Y$ is the given desired state, and $S:U\to Y$ is the (nonlinear) parameter-to-state mapping.

Following \cite{CK:2013,CIK:2014}, we can derive a first-order necessary primal-dual optimality system
\begin{equation}\label{eq:formal_opt}
    \left\{\begin{aligned}
            -\bar p &= S'(\bar u)^*(S(\bar u)-z),\\
            \bar u &\in \partial\calG_0^*(\bar p)
    \end{aligned}\right.
\end{equation}
(where $\partial\calG_0^*$ is the convex subdifferential of the (convex) Fenchel conjugate of $\calG_0$),
whose Moreau--Yosida regularization is amenable to numerical solution by a superlinearly convergent semismooth Newton method. While in earlier works, we considered the case of linear $S$, the main focus here is on
nonlinear, and in particular bilinear, parameter-to-state mappings. Our aim is to demonstrate that the proposed methodology provides a viable technology for solving multi-material shape and topology optimization problems without the need for computing shape or topological derivatives.

Let us very briefly point out some of the alternative approaches for topology optimization and give very selective references.
Relaxation methods \cite{Allaire:2002, Bendsoe2003, Pironneau1984, Sprekels2006} are amongst the earliest and most frequently used techniques.
A standard approach for the two-material case  consists in setting  $u(x)=u_1 w(x) + u_2(1-w(x))$ and minimizing over the set of all characteristic functions $w(x)\in\{0,1\}$. This problem is non-convex, but its convex relaxation -- minimizing over all $w(x)\in[0,1]$ -- often has a bang-bang solution, i.e., $w(x)\in\{0,1\}$ almost everywhere. For multi-material optimization, this approach can be extended by introducing multiple characteristic functions; non-overlapping materials can be enforced by considering the third domain as an intersection of two (possibly overlapping) domains, e.g., $u(x) = u_1w_1(x) + u_2 (1-w_1(x))w_2(x) + u_3 (1-w_1(x))(1-w_2(x))$ for $w_1(x),w_2(x)\in[0,1]$. For an increasing number $d$ of materials, this approach has obvious drawbacks due to the combinatorial nature and increasing non-linearity.
Shape calculus techniques \cite{Pironneau1984, Sokolowski1992} focus on the effect of smooth perturbations of the interfaces on the cost functional and have reached a high level of sophistication. From the point of view of numerical optimization, they are first-order methods and stable, with the drawback that they mostly allow only smooth variations  of the reference geometry. When combined with level-set techniques \cite{ Allaire2004, Ito2001}, they are flexible enough to allow vanishing and merging of connected components, but they do not allow the creation of holes. This is allowed in the context of  topological sensitivity analysis \cite{Garreau2001, Sokolowski1999}, which investigates the effect of the creation of holes on the cost. Let us point out that in our work we do not rely in any explicit manner on knowledge of the shape or the topological derivatives. Moreover, the numerical technique that we propose is of second order rather than of gradient nature. Second-order shape  or topological derivative analysis is available, but it is involved when it comes to numerical realization. Multi-material optimization for elasticity problems are further investigated in \cite{Haslinger:2010} by means of H-convergence methods and by phase-field methods in \cite{Blank:2014}.
The work which in part is most closely related to ours is \cite{Amstutz2011}, see also \cite{Amstutz2006, Amstutz2010}, where for the case of linear solution operators and two materials, the set of coefficients is expressed in terms of characteristic functions, and the resulting problem is considered in function spaces rather than in terms of subdomains and their boundaries. The first order-optimality condition is derived and formulated as a nonlinear equation for which a semi-smooth Newton method is applicable.

The general theory to be developed will be tested on two particular model problems. For the first one, the mapping $S:u\mapsto y\in H^2(\Omega)$ is the solution operator to
\begin{equation}\label{eq:prob_pot}
    \left\{\begin{aligned}
            -\Delta y + uy &= f,\\
            \partial_\nu y &= 0,
    \end{aligned}\right.
\end{equation}
for $u$ in an appropriate subset of $L^2(\Omega)$ and fixed $f\in L^2(\Omega)$.
The second one is motivated by the  mapping
$\tilde S:u \mapsto y\in H^1_0(\Omega)$, where $y$ is the solution to
\begin{equation}\label{eq:prob_diff}
    \left\{\begin{aligned}
            -\nabla\cdot(u\nabla y) &= f,\\
            y &= 0,
    \end{aligned}\right.
\end{equation}
with $u$ in a subset of $L^\infty(\Omega)$. It is well known from \cite{Murat:1977} that \eqref{eq:formal_prob} does not admit a solution in this case, since the differential equation is not closed under  weak-$*$  convergence in $L^\infty(\Omega)$.
For this reason we  shall introduce a local smoothing operator $G$ and define the associated solution operator as $S=\tilde S\circ G$. We point out that the operator to be used in \cref{sec:solution} will be of local nature. It acts as smoothing of the constant values $u_i$ across interior interfaces of boundaries between different materials and will justify the use of a semi-smooth Newton method for the numerical realization.

This work is organized as follows. In \cref{sec:existence}, existence of a solution to \eqref{eq:formal_prob} is shown and the explicit form of \eqref{eq:formal_opt} is derived. \Cref{sec:l1} is devoted to the explicit form of $\calG$ and its comparison to an alternative $L^1$-type penalty. The numerical solution is addressed in \cref{sec:solution}, where the Moreau--Yosida regularization and its convergence are treated for general nonlinear mappings in \cref{sec:solution:regularization}. The analysis of the semismooth Newton method for the regularized problems requires specific properties of the state equation and is therefore addressed in \cref{sec:solution:ssn} separately for each model problem. Finally, numerical results are presented in \cref{sec:examples}.

\section{Existence and optimality conditions}
\label{sec:existence}

We set
\begin{align}
    &\calF:L^2(\Omega)\to \Rbar, \qquad &\calF(u) &= \frac12\norm{S(u)-z}^2_Y,\\
    &\calG_0:L^2(\Omega)\to \Rbar,\qquad &\calG_0(u) &= \frac\alpha2\norm{u}^2_{L^2} + \beta\int_\Omega \prod_{i=1}^d |u(x)-u_i|^0 \,dx + \delta_U(u),
\end{align}
where $U\subset L^2(\Omega)$ is a convex and closed set and $\delta_U$ is the indicator function in the sense of convex analysis, i.e.,
\begin{equation}\label{eq:indicator}
\delta_U(u) = \begin{cases} 0 & \text{if } u\in U,\\ \infty & \text{if } u\notin U.\end{cases}
\end{equation}
For $S:U\to Y$, we assume that
\begin{enumerate}[label= (\textsc{a}\arabic{enumi}), ref=\textsc{a}\arabic{enumi},align=left]
    \item $S:U\to Y$ is weak-to-weak continuous, i.e., $\{u_n\}_{n\in\N}\subset U$ and $u_n\wkto u\in U$ in $L^2(\Omega)$ implies $S(u_n)\wkto S(u)\in Y$;\label{ass:a1}
    \item $S$ is twice Fréchet differentiable.\label{ass:a2}
\end{enumerate}
Both assumptions are satisfied for the two model problems stated in the introduction.
Now consider
\begin{equation}
    \label{eq:problem}
    \min_{u\in L^2(\Omega)} \calF(u) + \calG(u)
\end{equation}
for
\begin{equation}
    \calG := \calG_0^{**},
\end{equation}
where $\calG_0^{**}$ is the biconjugate of $\calG_0$, i.e., the Fenchel conjugate of
\begin{equation}
    \calG^*_0: L^2(\Omega)\to \Rbar,\qquad \calG^*_0(q) = \sup_{u\in L^2(\Omega)} \scalprod{q,u} - \calG_0(u).
\end{equation}
Since Fenchel conjugates are always lower semicontinuous and convex, see, e.g. \cite[Proposition 13.11]{Bauschke:2011}, it follows that $\calG$ is proper,  lower semicontinuous and convex for any  $\alpha >0$ and $\beta\geq 0$.
Existence of a solution to \eqref{eq:formal_prob} thus follows under the stated assumptions on $S$.
\begin{proposition}\label{thm:existence}
    There exists a solution $\bar u\in U$ to \eqref{eq:formal_prob} for any $\alpha >0$ and $\beta\geq 0$.
\end{proposition}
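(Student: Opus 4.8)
The plan is to apply the direct method of the calculus of variations to the objective $J := \calF + \calG$ of \eqref{eq:problem}, which coincides with \eqref{eq:formal_prob} since $\dom\calG\subseteq U$. Three ingredients are needed: that $J$ is proper, that $J$ is coercive on $L^2(\Omega)$, and that $J$ is weakly sequentially lower semicontinuous; existence of a minimizer then follows in the usual way. Properness is immediate: any piecewise constant $u_0$ taking values in $\{u_1,\dots,u_d\}$ lies in $U$ and satisfies $\calG(u_0)\le\calG_0(u_0)<\infty$ (the biconjugate being a minorant) as well as $\calF(u_0)<\infty$, while $J\ge 0$ shows the infimum is finite.

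For coercivity I would exploit that the quadratic term already dominates $\calG_0$. Since the $\ell^0$-penalty and the indicator $\delta_U$ are nonnegative, $\calG_0\ge\frac\alpha2\norm{\cdot}_{L^2}^2$, and because the right-hand side is convex and lower semicontinuous, taking biconjugates preserves the inequality, giving $\calG=\calG_0^{**}\ge\frac\alpha2\norm{\cdot}_{L^2}^2$. Hence $J(u)\ge\frac\alpha2\norm{u}_{L^2}^2\to\infty$ as $\norm{u}_{L^2}\to\infty$ for $\alpha>0$. A minimizing sequence $\{u_n\}$ is therefore bounded in the Hilbert space $L^2(\Omega)$ and, after passing to a subsequence, converges weakly to some $\bar u$; since $U$ is convex and closed, it is weakly closed, so $\bar u\in U$. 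The same argument applied to the minorant $\delta_U\le\calG_0$ yields $\calG\ge\delta_U$, confirming $\dom\calG\subseteq U$ so that the restriction to $U$ is automatic.

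The decisive step is weak lower semicontinuity of $J$, where the two summands must be handled differently. For $\calG$ I would simply invoke that a proper, convex, lower semicontinuous functional on a Hilbert space is weakly sequentially lower semicontinuous, so $\calG(\bar u)\le\liminf_n\calG(u_n)$. The genuinely nonlinear part is $\calF$, for which convexity is unavailable; here assumption \ref{ass:a1} is exactly what is required. It gives $S(u_n)\wkto S(\bar u)$ in $Y$, hence $S(u_n)-z\wkto S(\bar u)-z$, and since $\norm{\cdot}_Y^2$ is convex and continuous, thus weakly lower semicontinuous, we obtain $\calF(\bar u)\le\liminf_n\calF(u_n)$. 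I expect this to be the main, and indeed the only nontrivial, obstacle, as it is the single point where the nonlinearity of $S$ enters; everything else is convex-analytic bookkeeping. Adding the two estimates gives $J(\bar u)\le\liminf_n J(u_n)=\inf_u J(u)$, so $\bar u\in U$ is the sought minimizer.
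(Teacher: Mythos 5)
Your proof is correct, and it follows the same direct-method skeleton as the paper: properness, coercivity, weak sequential lower semicontinuity of $\calF$ via \ref{ass:a1} plus weak lower semicontinuity of $\calG$ via convexity, then extraction of a weakly convergent minimizing subsequence. Where you genuinely diverge is in the convex-analytic ingredients. The paper obtains coercivity \emph{and} the containment $\dom\calG\subset U$ in one stroke from the domain sandwich $\dom \calG_0 \subset \dom \calG_0^{**} \subset \overline{\dom}\,\calG_0 = U$ (citing \cite[Proposition 13.40]{Bauschke:2011}), so that $\calG$ is coercive simply because its effective domain is bounded; note this argument never uses $\alpha>0$ and would work verbatim for $\alpha=0$. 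You instead push the minorants $\frac\alpha2\norm{\cdot}_{L^2}^2\leq\calG_0$ and $\delta_U\leq\calG_0$ through biconjugation (order reversal of conjugation plus Fenchel--Moreau), obtaining the quantitative lower bound $\calG\geq\frac\alpha2\norm{\cdot}_{L^2}^2$ and, separately, $\calG\geq\delta_U$. Your route is more self-contained (it needs only Fenchel--Moreau rather than the biconjugate-domain result), yields explicit quadratic growth of the objective, and incidentally sidesteps the paper's slightly inaccurate phrase that $U$ is a ``compact subset of $L^2(\Omega)$'' (it is bounded, closed and convex, hence only weakly compact); on the other hand, it genuinely requires $\alpha>0$, whereas the paper's boundedness-of-domain argument is insensitive to $\alpha$. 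Your observation that $\calG\geq\delta_U$ both keeps the minimizing sequence in $U$ and justifies the equivalence of \eqref{eq:problem} with \eqref{eq:formal_prob} is a point the paper leaves implicit.
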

\begin{proof}
    Due to Assumption \eqref{ass:a1}, the tracking term $\calF$ is weakly lower semicontinuous and bounded from below. Similarly, $\calG_0$ is bounded from below by $0$, which implies that $\calG_0^{**}\geq 0$ as well, see, e.g. \cite[Proposition 13.14]{Bauschke:2011}. Since $U$ is a compact subset of $L^2(\Omega)$, we have
    \begin{equation}
        U=\dom \calG_0 \subset \dom \calG_0^{**} \subset \overline{\dom}\,\calG_0 = \overline{U}=U,
    \end{equation}
    see, e.g., \cite[Proposition 13.40]{Bauschke:2011},
    and hence that $\calG=\calG_0^{**}$ is coercive. This implies that $\calF+\calG$ is proper, weakly lower semicontinous and coercive, and application of Tonelli's direct method yields existence of a minimizer.
\end{proof}

We next derive first-order necessary optimality conditions of primal-dual type.
\begin{proposition}\label{thm:optsys}
    Let $\bar u\in U$ be a local minimizer of \eqref{eq:problem}. Then there exists a $\bar p \in L^2(\Omega)$ satisfying
    \begin{equation}
        \label{eq:optsys}
        \left\{\begin{aligned}
                -\bar p &= S'(\bar u)^*(S(\bar u)-z),\\
                \bar u &\in \partial\calG^*(\bar p).
        \end{aligned}\right.
    \end{equation}
\end{proposition}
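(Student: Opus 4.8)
The plan is to use that the objective in \eqref{eq:problem} is the sum of the Fréchet-differentiable term $\calF$ and the proper, convex, lower semicontinuous term $\calG$; for such a ``smooth plus convex'' structure a first-order condition of the form $-\calF'(\bar u)\in\partial\calG(\bar u)$ holds without any constraint qualification, and it then only remains to evaluate $\calF'(\bar u)$ and to dualize the resulting inclusion via the Fenchel--Young identity. Note in particular that the admissibility constraint $u\in U$ is already built into $\calG$ through $\delta_U$, so that no separate multiplier for $U$ is required.

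First I would derive the variational inequality characterizing local optimality. Fix an arbitrary $v\in\dom\calG=U$; since $U$ is convex, the convex combinations $u_t:=(1-t)\bar u+tv$ lie in $U$ for all $t\in[0,1]$. For $t$ sufficiently small, local minimality of $\bar u$ gives $\calF(\bar u)+\calG(\bar u)\le\calF(u_t)+\calG(u_t)$, while convexity of $\calG$ yields $\calG(u_t)\le(1-t)\calG(\bar u)+t\calG(v)$. Combining the two inequalities and dividing by $t>0$ gives
\begin{equation}
    \frac{\calF(\bar u)-\calF(u_t)}{t}\le\calG(v)-\calG(\bar u).
\end{equation}
Letting $t\downarrow 0$ and using that $\calF$ is Fréchet differentiable (a consequence of Assumption~\eqref{ass:a2}) leads to $-\scalprod{\calF'(\bar u),v-\bar u}\le\calG(v)-\calG(\bar u)$ for all $v\in L^2(\Omega)$, which is precisely the statement $-\calF'(\bar u)\in\partial\calG(\bar u)$.

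Next I would compute $\calF'(\bar u)$ explicitly. Writing $\calF=\frac12\norm{\,\cdot\,-z}_Y^2\circ S$ and applying the chain rule together with \eqref{ass:a2}, the $L^2$-gradient is $\calF'(\bar u)=S'(\bar u)^*(S(\bar u)-z)$, where $S'(\bar u)^*$ denotes the Hilbert-space adjoint of the linearized state operator. Setting $\bar p:=-\calF'(\bar u)$ therefore produces the adjoint equation $-\bar p=S'(\bar u)^*(S(\bar u)-z)$ together with the inclusion $\bar p\in\partial\calG(\bar u)$. To convert the latter into the desired form, I would invoke that $\calG=\calG_0^{**}$ is proper, convex and lower semicontinuous, so that $\calG^{**}=\calG$ and hence $\bar p\in\partial\calG(\bar u)$ is equivalent to the Fenchel--Young equality $\calG(\bar u)+\calG^*(\bar p)=\scalprod{\bar p,\bar u}$, which in turn is equivalent to $\bar u\in\partial\calG^*(\bar p)$ (see, e.g., \cite[Corollary 16.30]{Bauschke:2011}). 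This yields the second relation in \eqref{eq:optsys}.

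The step requiring the most care is the passage to the limit $t\downarrow 0$ in the first inequality when $\bar u$ lies on the boundary of $U$: one must ensure that the difference quotient of $\calF$ converges to the directional derivative along the admissible direction $v-\bar u$. This is guaranteed here because the points $u_t$ remain in $U$, where $\calF$ is finite and Fréchet differentiable, so the one-sided directional derivative exists and equals $\scalprod{\calF'(\bar u),v-\bar u}$. Beyond this point the derivation is a routine application of convex duality and mirrors the linear case treated in \cite{CK:2013,CIK:2014}.
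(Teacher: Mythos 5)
Your proposal is correct and follows essentially the same route as the paper's own proof: the convex-combination argument exploiting convexity of $\calG$, division by $t$ and passage to the limit using Fréchet differentiability of $\calF$ to obtain $-\calF'(\bar u)\in\partial\calG(\bar u)$, then dualization to $\bar u\in\partial\calG^*(\bar p)$ and the chain rule for $\calF'$. The extra details you supply (the Fenchel--Young identity for the subdifferential inversion, and the remark that the inequality extends trivially to $v\notin U$ since $\dom\calG=U$) are elaborations of steps the paper treats more tersely, not a different argument.
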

\begin{proof}
    Let $\bar u\in U$ be a local minimizer, i.e., for $t>0$ small enough and any $u\in U$ there holds
    \begin{equation}
        \label{eq:optsys1}
        \calF(\bar u) + \calG(\bar u) \leq \calF(\bar u+t(u-\bar u)) + \calG(\bar u+t(u-\bar u)).
    \end{equation}
    Since $\calG$ is convex, we have
    \begin{equation}
        \calG(\bar u + t(u-\bar u)) =  \calG(t u + (1-t)\bar u) \leq t\calG(u) + (1-t)\calG(\bar u),
    \end{equation}
    which implies
    \begin{equation}
        \label{eq:optsys2}
        \calG(t u + (1-t)\bar u) - \calG(\bar u) \leq t (\calG(u)-\calG(\bar u)).
    \end{equation}
    Inserting this in \eqref{eq:optsys1} and rearranging yields
    \begin{equation}
        \calF(\bar u+t(u-\bar u)) - \calF(\bar u) + t (\calG(u)-\calG(\bar u)) \geq 0.
    \end{equation}
    Since $\calF$ is Fréchet-differentiable due to Assumption \eqref{ass:a2}, we can divide by $t>0$ and let $t\to 0$ to obtain
    \begin{equation}
        \scalprod{\calF'(\bar u) ,u-\bar u}  + \calG(u)-\calG(\bar u) \geq 0
    \end{equation}
    for every $u\in U$, i.e.,
    \begin{equation}
        \bar p:= -\calF'(\bar u) \in \partial\calG(\bar u).
    \end{equation}
    Since $\calG$ is convex, this is equivalent to $\bar u\in \partial\calG^*(\bar p)$. Applying the chain rule for Fréchet derivatives to $\calF$ then yields the desired optimality conditions.
\end{proof}
The question of optimality of solutions to Problem \eqref{eq:problem} with respect to the non-convex functional $\calF+\calG_0$ has been addressed (for linear $S$) in \cite{CK:2013}; here we only remark that since $\calG=\calG_0^{**}\leq \calG_0$ and $\calG(u) = \calG_0(u)$ for $u(x)\in\{u_1,\dots,u_d\}$ almost everywhere (see \cref{sec:l1} below), it follows that if a (local) minimizer $\bar u$ of \eqref{eq:problem} satisfies $\bar u(x)\in\{u_1,\dots,u_d\}$ almost everywhere, we have for all $u\in U$ (sufficiently close to $\bar u$) that
\begin{equation}
    \calF(u) + \calG_0(u) \geq \calF(u) + \calG(u) \geq  \calF(\bar u) + \calG(\bar u) =  \calF(\bar u) + \calG_0(\bar u),
\end{equation}
i.e., $\bar u$ is a (local) minimizer of $\calF +\calG_0$ as well.

Since $\calG^* = (\calG_0^{**})^{*} = \calG_0^{***} = \calG_0^*$, see, e.g., \cite[Proposition 13.14\,(iii)]{Bauschke:2011}, we can make use of the following characterization from \cite[\S\,2.1]{CK:2013}.
\begin{corollary}\label{cor:subdiff}
    If $\alpha$ and $\beta$ satisfy the relation
    \begin{equation}
        \label{eq:cond_beta}
        \frac\alpha2 (u_{i+1}-u_i) \leq \sqrt{2\alpha\beta} \quad\text{for all } 1\leq i<d,
    \end{equation}
    then $u\in \partial\calG^*(p)$ if and only if for almost all $x\in \Omega$,
    \begin{equation}\label{eq:kk1}
        u(x) \in
        \begin{cases}
            \{u_1\} & p(x)<\frac\alpha2 (u_1+u_2),\\
            \{u_i\} & \frac\alpha2(u_{i-1}+u_i) < p(x) < \frac\alpha2(u_{i}+u_{i+1}),\qquad  1<i<d,\\
            \{u_d\} & p(x)>\frac\alpha2 (u_{d-1}+u_d),\\
            [u_{i},u_{i+1}] & p(x) = \frac\alpha2(u_{i}+u_{i+1}), \qquad 1\leq i < d.
        \end{cases}
    \end{equation}
\end{corollary}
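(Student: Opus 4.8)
The starting point is the identity $\calG^*=\calG_0^*$ observed above, which reduces the task to computing the subdifferential of $\calG_0^*$. The plan is to exploit that $\calG_0$ is an autonomous integral functional, $\calG_0(u)=\int_\Omega g(u(x))\,dx$, with scalar integrand
\[
    g(v)=\frac\alpha2 v^2+\beta\prod_{i=1}^d|v-u_i|^0+\delta_{[u_1,u_d]}(v),
\]
so that $g(u_i)=\frac\alpha2 u_i^2$ at the admissible values, $g(v)=\frac\alpha2 v^2+\beta$ for $v\in(u_1,u_d)$ distinct from all $u_i$, and $g\equiv+\infty$ outside $[u_1,u_d]$. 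Since $g$ is an autonomous, lower semicontinuous, Borel-measurable normal integrand, the theory of integral functionals yields $\calG_0^*(p)=\int_\Omega g^*(p(x))\,dx$ and, because $g^*$ turns out to be finite and continuous on $\R$ (see below), the pointwise characterization $u\in\partial\calG_0^*(p)$ if and only if $u(x)\in\partial g^*(p(x))$ for almost every $x\in\Omega$. This collapses the problem to the scalar computation of $g^*$ and $\partial g^*$ on $\R$.

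Next I would evaluate $g^*(p)=\sup_{v}[\,pv-g(v)\,]$. The supremum is attained either at one of the discrete values $u_i$, where the contribution is the affine expression $pu_i-\frac\alpha2 u_i^2$, or in the penalized interior, where it is bounded by $\sup_{v}[pv-\frac\alpha2 v^2]-\beta$. The discrete family produces the piecewise-linear convex function $h(p):=\max_{1\le i\le d}[\,pu_i-\frac\alpha2 u_i^2\,]$, whose successive affine pieces (of slope $u_i$) exchange dominance exactly where $pu_i-\frac\alpha2 u_i^2=pu_{i+1}-\frac\alpha2 u_{i+1}^2$, i.e.\ at $p=\frac\alpha2(u_i+u_{i+1})$ --- precisely the thresholds in \eqref{eq:kk1}. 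The crux is to show that under \eqref{eq:cond_beta} the discrete family dominates, so that $g^*=h$. Passing to biconjugates, this is equivalent to $g^{**}$ being the piecewise-linear interpolant of the points $(u_i,\frac\alpha2 u_i^2)$, which in turn holds if and only if each secant of the parabola $\frac\alpha2 v^2$ over $[u_i,u_{i+1}]$ stays below the penalized branch $\frac\alpha2 v^2+\beta$. The gap between secant and parabola is maximized at the midpoint and equals $\frac\alpha8(u_{i+1}-u_i)^2$; requiring this to be at most $\beta$ is, after squaring, exactly condition \eqref{eq:cond_beta}.

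With $g^*=h$ established, the scalar subdifferential is read off from the piecewise-linear structure of $h$: on each open interval where a single slope $u_i$ is active one has $\partial g^*(p)=\{u_i\}$, while at a breakpoint $p=\frac\alpha2(u_i+u_{i+1})$ the subdifferential is the full interval $[u_i,u_{i+1}]$ spanned by the slopes of the two adjacent affine pieces. Reinserting this into the pointwise characterization from the first step delivers \eqref{eq:kk1}. I expect the main obstacle to lie not in these scalar computations, which are elementary, but in justifying the measurable-selection step: one must confirm that conjugation commutes with integration for the (nonconvex) integrand $g$ and that the subdifferential of the resulting integral functional is indeed given pointwise on $L^2(\Omega)$ --- this is where the machinery underlying the cited result \cite{CK:2013} carries the argument.
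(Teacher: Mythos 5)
Your proposal is correct and takes essentially the same approach that the paper relies on: after the identity $\calG^*=\calG_0^*$, the paper simply cites the pointwise characterization from \cite[\S 2.1]{CK:2013}, and your argument (reduction to the scalar normal integrand, computation of $g^*$ as the maximum of the affine functions $p\mapsto pu_i-\tfrac\alpha2 u_i^2$ with breakpoints $\tfrac\alpha2(u_i+u_{i+1})$, and reading off $\partial g^*$ from the piecewise-linear structure) is precisely the content of that citation. Moreover, your key dominance step --- that under \eqref{eq:cond_beta} the secant lies below the penalized parabola, with maximal gap $\tfrac\alpha8(u_{i+1}-u_i)^2$ at the midpoint --- is the same computation the paper itself performs in \cref{sec:l1} when deriving $g_0^{**}$.
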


Thus, with \eqref{eq:cond_beta} holding, $u(x)$ coincides with one of the preassigned control values $u_i$, except in the singular cases when $p(x)=\frac\alpha2(u_{i}+u_{i+1})$ for some $i$. If, on the other hand, \eqref{eq:cond_beta} is not satisfied, then $u=\frac{1}{\alpha}p$ may hold on subsets $\hat \Omega$ of nontrivial measure. In this case we call $u|_{\hat \Omega}$ a \emph{free arc}, and refer to \cite{CK:2013} for details.

\section{Relation to \texorpdfstring{$\scriptstyle L^1$}{L¹} penalization}
\label{sec:l1}

We now compare the penalty $\calG$ to a direct $L^1$ penalization of $u(x)-u_i$, $i\in\{1,\dots,d\}$. First, we give an explicit characterization of $\calG=\calG_0^{**}$. Since $\calG_0$ is defined via the integral of a pointwise function of $u(x)$,
we can compute the Fenchel conjugate and its subdifferential pointwise as well; see, e.g., \cite[Props.~IV.1.2, IX.2.1]{Ekeland:1999a}, \cite[Prop.~16.50]{Bauschke:2011}.
It therefore suffices to consider
\begin{equation}
    g_0:\R \to \Rbar,\qquad g_0(v) = \frac\alpha2 |v|^2 + \beta \prod_{i=1}^d |v-u_i|^0 + \delta_{[u_1,u_d]}(v),
\end{equation}
where $\delta_{[u_1,u_d]}$ is again the indicator function in the sense of convex analysis, cf.~\eqref{eq:indicator}.
To compute $g_0^{**}$ we make use of the fact that the biconjugate coincides with the lower convex envelope (or Gamma-regularization)
\begin{equation}
    g_\Gamma(v) = \sup\set{a(v)}{a:\R\to\R\text{ is affine and }a\leq g_0},
\end{equation}
see, e.g., \cite[Theorem 2.2.4\,(a)]{Schirotzek:2007}. We assume again that \eqref{eq:cond_beta} holds.

First, note that $g_0(u_i) = \frac\alpha2u_i^2$ for all $1\leq i\leq d$, which implies that $g_\Gamma(u_i)\leq \frac\alpha2u_i^2$. Now consider a single interval $[u_i,u_{i+1}]$ for $1\leq i<d$. Obviously, a candidate for $g_\Gamma(v)$ in $v\in\{u_i,u_{i+1}\}$ is given by the linear interpolant $g_i$ of $g_0(u_i)$ and $g_0(u_{i+1})$, i.e.,
\begin{equation}
    g_i(v) = \frac\alpha 2\left((u_i+u_{i+1})v - u_i u_{i+1}\right).
\end{equation}
This function in fact satisfies the conditions for $g_\Gamma$ also for $v\in (u_i,u_{i+1})$, which follows from the fact that on this open interval, the quadratic function
\begin{equation}
    (g_0-g_i)(v) = \frac\alpha 2\left(v^2-(u_i+u_{i+1})v + u_i u_{i+1}\right) + \beta
\end{equation}
has a unique minimizer (since $\alpha>0$) in its critical point $\bar v = \frac12 (u_i+u_{i+1})$, where
\begin{equation}
    \begin{aligned}
        (g_0-g_i)(v) &= \frac\alpha2 \left(-\frac14(u_i+u_{i+1})^2 + u_iu_{i+1}\right)+\beta\\
                     &= -\frac\alpha 8\left(u_{i+1}-u_i\right)^2  + \beta\geq 0
    \end{aligned}
\end{equation}
by \eqref{eq:cond_beta}. Hence, $g_i(v) \leq g_0(v)$ for all $v\in[u_i,u_{i+1}]$ with equality in $v\in\{u_i,u_{i+1}\}$.

To obtain a global function, we define $\bar g:[u_1,u_d]\to \R$ via
\begin{equation}
    \bar g(v) := g_i(v) \qquad\text{for } v\in[u_i,u_{i+1}],\quad 1\leq i<d.
\end{equation}
It remains to verify that for each fixed $i$, we have $g_j(v)\leq g_i(v)$ for all $j\neq i$ and $v\in[u_i,u_{i+1}]$. A short computation shows that
$g_j(u_{i}) \leq g_{i}(u_{i})$. Moreover, due to the ordering of the $u_i$ we have
\begin{equation}
    g_j'(v) = \frac\alpha2(u_j+u_{j+1}) > \frac\alpha2(u_{i+1}+u_{i+2}) = g_{i}'(v)
\end{equation}
for all $j>i$ and similarly $g_i'(v)<g_j'(v)$ for all $j<i$. This implies that $g_j(v)\leq g_i(v)$ for all $j\neq i$ and $v\in[u_i,u_{i+1}]$.
Using again that $\dom g_\Gamma = \dom g_0 = [u_1,u_d]$ since the interval is closed, we obtain
\begin{equation}
    \label{eq:g_gamma}
    \begin{aligned}
        g_0^{**}(v) &= g_\Gamma(v) = \bar g(v)+\delta_{[u_1,u_d]}(v) \\
                    &= 
        \begin{cases}
            \frac\alpha 2\left((u_i+u_{i+1})v - u_i u_{i+1}\right) & v\in[u_i,u_{i+1}],\quad 1\leq i <d,\\
            \infty & v\in \R\setminus[u_1,u_d].
        \end{cases}
    \end{aligned}
\end{equation}
and hence
\begin{equation}
    \calG(u) = \int_\Omega g_\Gamma(u(x))\,dx.
\end{equation}

From the above, we have that $g_\Gamma$ is the unique continuous and piecewise (on $[u_i,u_{i+1}]$) affine function with $g_\Gamma(u_i) = \frac\alpha2 u_i^2$. It is not surprising that using such a function in optimization promotes solutions lying in the ``kinks'' (cf. sparse optimization using $\ell_1$-type norms, where the only ``kink'' is at $v=0$). Other penalties $h$ with a similar piecewise affine structure can be constructed by prescribing different values for $h(u_i)$, although the obvious choice $h(u_i)=\alpha|u_i|$ results in a shifted $\ell_1$ norm which has only one ``kink'' at $v=\min_i |u_i|$ and hence does not have the desired structure.

An alternative to this piecewise affine construction is the direct $\ell^1$-penalization of the deviation, i.e., choosing
\begin{equation}
    h(v) = \alpha \sum_{i=1}^d |v-u_i| + \delta_{[u_1,u_d]}(v).
\end{equation}
(Note that the product $\prod_{i=1}^d |v-u_i|$ is a polynomial of order $d$ and hence in general is not convex.) We first point out that the value $h(u_i)$ depends on all $u_j$, $1\leq j\leq d$, (and in particular, on $d$) rather than on $u_i$ only, which may be undesirable; see \cref{fig:gbicjonj}.
\begin{figure}
    \centering
\def\numsamples{301}
\begin{tikzpicture}[baseline,style={font=\small}]
\begin{axis}[%
width=0.5\textwidth,
xmin=-1, xmax=2,
ymin=0.2, ymax=1.26,
legend style={draw=none},
legend pos=north west,
xlabel style={at={(axis cs:2.1,0.25)}},
xlabel={$v$},
xtick=\empty,
extra x tick style={grid=major},
extra x ticks={-1,1,2},
extra x tick labels={$u_1$,$u_2$,$u_3$},
axis y line=left,
axis x line=bottom,
]
\addplot[
domain=-1:2,
         color=DarkBlue,solid,line width=1.5pt] 
{
    and(x>=-1,x<1)*0.25*((-1+1)*x -(-1)*(1)) +
    and(x>=1,x<=2)*0.25*((1+2)*x -(1)*(2))
};
\addlegendentry{$g_0^{**}$};
\addplot[
domain=-1:2, color=DarkGreen,dashed,line width=1.5pt]
{
    0.25*pow(x,2)+0.26
};
\addlegendentry{$g_0$};
\addplot[samples at = {-1,1,2},
domain=-1:2, color=DarkGreen,mark=*, mark size=2.5pt,only marks]
{
(0.25*pow(x,2))
};
\addplot[samples at = {-1,1,2},
domain=-1:2, color=DarkGreen,mark=*,fill=white, mark size=2.5pt,only marks]
{
(0.25*pow(x,2))+0.26
};
\end{axis}
\end{tikzpicture}
\def\numsamples{201}
\begin{tikzpicture}[baseline,style={font=\small}]
\begin{axis}[%
width=0.5\textwidth,
xmin=-1, xmax=2,
ymin=1.5, ymax=2.5,
xlabel style={at={(axis cs:2.1,1.55)}},
xlabel={$v$},
xtick=\empty,
extra x tick style={grid=major},
extra x ticks={-1,1,2},
extra x tick labels={$u_1$,$u_2$,$u_3$},
axis y line=left,
axis x line=bottom,
]
\addplot[
            domain=-1:2,
            color=DarkBlue,solid,line width=1.5pt] 
{
    0.5*(abs(x-(-1))+abs(x-(1))+abs(x-(2)))
};
\end{axis}
\end{tikzpicture}
    \caption{Plot of $g_0^{**}$ and $g_0$ (left), $h$ (right) for $d=3$, $(u_1,u_2,u_3)$, $\alpha=0.5$, $\beta=0.26$ (satisfying \eqref{eq:cond_beta})}
    \label{fig:gbicjonj}
\end{figure}
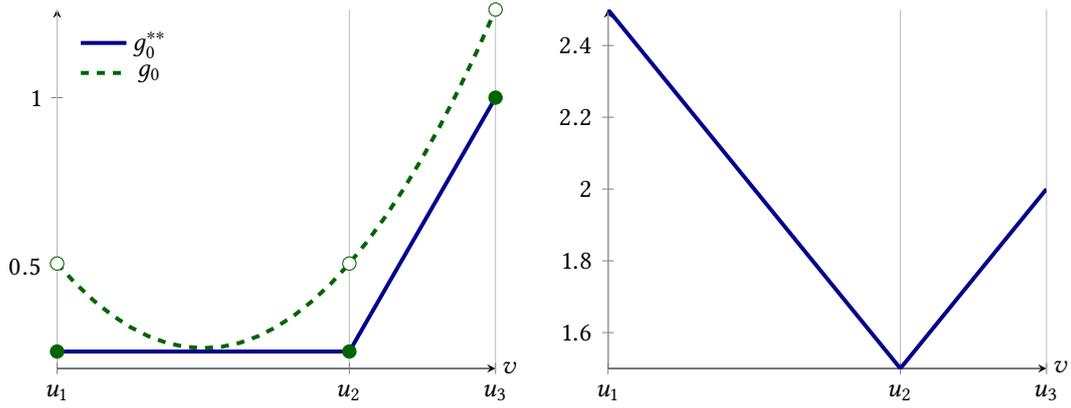
To further illustrate the practical difference between using $g_\Gamma$ and $h$, we compute the corresponding subdifferential $\partial h^*$ which would appear in \eqref{eq:optsys}. First, we determine the Fenchel conjugate
\begin{equation}
    \label{eq:l1:sup}
    h^*(q) = \sup_{v\in[u_1,u_d]} vq - \alpha\sum_{i=1}^d |v-u_i|.
\end{equation}
Since the function to be maximized is continuous and piecewise affine on $\R$, the supremum must be attained at $\bar v = u_i$ for some $1\leq i\leq d$. Making use of the fact that the $u_i$ are ordered, we obtain that $h^*(q)$ must be equal to one of the functions
\begin{equation}
    \begin{aligned}
        h_i^*(q) &= qu_i - \alpha \left(\sum_{j=1}^{i-1}(u_i-u_j) +\sum_{j=i+1}^{d}(u_j-u_i)\right) \\
                 &=u_i (q+\alpha(d+1-2i))+\alpha\sum_{j=1}^{i-1}u_j - \alpha\sum_{j=i+1}^{d}u_j
    \end{aligned}
\end{equation}
(with the convention that empty sums evaluate to $0$). It remains to determine the supremum over $1\leq i\leq d$ based on the value of $q$. For this, we first compare $h_i^*(q)$ with $h_{i+1}^*(q)$. Simple rearrangement of terms shows that $h_i^*(q)\leq h_{i+1}^*(q)$ if and only if
\begin{equation}
    \alpha(2i-d)(u_{i+1}-u_i)\leq q(u_{i+1}-u_i).
\end{equation}
Since $u_{i+1}>u_i$, we deduce that this is the case if and only if $q\geq \alpha(2i-d)$. Hence, the supremum is attained for the largest $i$ for which $q\geq \alpha(2i-d)$. This yields
\begin{equation}
    h^*(q) = \begin{cases}
        u_1 (q+\alpha(d-1)) - \alpha\sum_{j=2}^d u_j & \frac1\alpha q<2-d,\\
        u_i(q+\alpha(d+1-2i)) - \alpha\sum_{j=1}^{i-1}u_j + \alpha\sum_{j=i+1}^{d}u_j & 2(i-1)-d \leq \frac1\alpha q < 2i-d, \ 1< i<d,\\
        u_d(q-\alpha(d+1)) + \alpha\sum_{j=1}^{d-1}u_j & \frac1\alpha q\geq d-2.
    \end{cases}
\end{equation}
Since $h^*$ is continuous and piecewise differentiable, we have that the convex subdifferential is given by
\begin{equation}
    \partial h^*(q) =
    \begin{cases}
        \{u_1\} & \frac1\alpha q<2-d,\\
        \{u_i\}& 2(i-1)-d < \frac1\alpha q < 2i-d, \quad 1< i<d,\\
        \{u_d\} & \frac1\alpha q>d-2,\\
        [u_i,u_{i+1}] & \frac1\alpha q = 2i-d, \quad 1\leq i < d.
    \end{cases}
\end{equation}
Comparing this with \cref{cor:subdiff}, we see that the case distinction is independent of $u_i$, but rather depends on $d$ only, with the individual cases always being intervals of length $2\alpha$. In particular, for fixed $q$, the value $\partial h^*(q)$ changes if the number of parameters $d$ is increased, independent of the magnitude of the additional parameters. Furthermore, since the distribution of intervals is symmetric around the origin, $h$ tends to favor for increasing $\alpha$ those $u_i$ closer to the ``middle parameter'' $u_{d/2}$, rather than those of smaller magnitude as is the case for $g_0^{**}$; see \cref{fig:hsubdiff}.
\begin{figure}
    \centering
\def\numsamples{1001}
\begin{tikzpicture}[baseline,style={font=\small}]
\begin{axis}[%
width=0.5\textwidth,
xmin=-1, xmax=1.5,
ymin=-1.2, ymax=2,
xtick=\empty,
extra x tick style={grid=major},
extra x ticks={0,0.75},
extra x tick labels={$\frac{\alpha}{2} (u_1 +u_2)$, $\frac{\alpha}{2} (u_2 +u_3)$},
xlabel style={at={(axis cs:1.6,-1.2)}},
xlabel={$q$},
axis y line=left,
axis x line=bottom,
legend pos=north west,
]
\addplot[domain=-1:1.5,
samples=\numsamples,
color=DarkBlue,line width=1.5pt,solid] 
{
 (x<=0.25*(-1+1)) * (-1) +
 and(x>=0.25*(-1+1), x<0.25*(1+2)) * (1) +
 (x>=0.25*(1+2)) * (2)
};
\end{axis}
\end{tikzpicture}%
\def\numsamples{1001}
\begin{tikzpicture}[baseline,style={font=\small}]
\begin{axis}[%
width=0.5\textwidth,
xmin=-1, xmax=1.5,
ymin=-1.2, ymax=2,
xtick=\empty,
extra x tick style={grid=major},
extra x ticks={-0.5,0.5},
extra x tick labels={$\alpha(2-d)$,$\alpha(4-d)$},
xlabel style={at={(axis cs:1.6,-1.2)}},
xlabel={$q$},
axis y line=left,
axis x line=bottom,
legend pos=north west,
]
\addplot[domain=-1:1.5,
samples=\numsamples,
color=DarkBlue,line width=1.5pt,solid] 
{
    (x<=0.5*(2-3)) * (-1) +
    and(x>0.5*(2-3),x<=0.5*(2*2-3)) * (1) + 
    (x>0.5*(2*2-3)) * (2)
};
\end{axis}
\end{tikzpicture}%
    \caption{Plot of $\partial g^*$ (left), $\partial h^*$ (right) for $d=3$, $(u_1,u_2,u_3)$, $\alpha=0.5$, $\beta=0.26$}
    \label{fig:hsubdiff}
\end{figure}
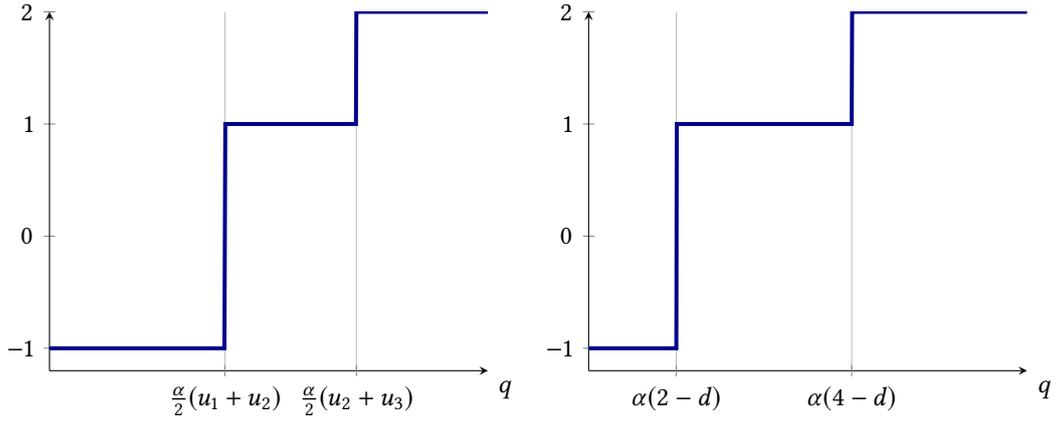

\section{Numerical solution}
\label{sec:solution}

For the numerical solution, we follow the approach described in \cite{CIK:2014} for linear parameter-to-state mappings, where we replace $\partial\calG^*$ by its Moreau--Yosida regularization and apply a semi-smooth Newton method with backtracking line search and continuation. In this section, we describe the necessary modifications for nonlinear mappings, arguing in terms of the functional instead of the optimality system. We first introduce the regularization and discuss its convergence to the original problem for general nonlinear mappings in \cref{sec:solution:regularization}. The explicit form and well-posedness of the Newton step (from which superlinear convergence follows) requires exploiting the structure of the mapping, hence we discuss it separately for each model problem in \cref{sec:solution:ssn}.

\subsection{Regularization}
\label{sec:solution:regularization}

Since $\calF$ is not convex, we cannot proceed directly to the regularized system. Instead, we start by considering for $\gamma>0$ the regularized problem
\begin{equation}
    \label{eq:problem_reg}
    \min_{u\in L^2(\Omega)} \calF(u) + \calG(u) + \frac\gamma2 \norm{u}_{L^2(\Omega)}^2.
\end{equation}
By the same arguments as in the proof of \cref{thm:existence}, we obtain the existence of a minimizer $u_\gamma\in U$. We now address convergence of $u_\gamma$ as $\gamma \to 0$.
\begin{proposition}
    The family $\{u_{\gamma}\}_{\gamma>0}$ of global minimizers to \eqref{eq:problem_reg} contains at least one subsequence $\{u_{\gamma_n}\}_{n\in\N}$ converging to a global minimizer of \eqref{eq:problem} as $n\to\infty$. Furthermore, for any such subsequence the convergence is strong.
\end{proposition}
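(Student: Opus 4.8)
The plan is to apply the direct method along a weakly convergent subsequence and to exploit the vanishing regularization term twice: once to identify the weak limit as a minimizer of the unregularized problem, and a second time, now in its role as a minimal-norm selection mechanism, to upgrade weak convergence to strong convergence. First I would extract a weak limit. Since $\calG=\calG_0^{**}$ has domain $U$ (as established in the proof of \cref{thm:existence}), every global minimizer $u_\gamma$ of \eqref{eq:problem_reg} necessarily lies in $U$, which is bounded, closed and convex and hence weakly sequentially compact. Thus for any sequence $\gamma_n\to 0$ there is a subsequence (not relabeled) with $u_{\gamma_n}\wkto\bar u$ for some $\bar u\in U$.

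Next I would show that $\bar u$ is a global minimizer of \eqref{eq:problem}. Fixing an arbitrary global minimizer $u^*$ of \eqref{eq:problem} (which exists by \cref{thm:existence}) and using that $u_{\gamma_n}$ minimizes \eqref{eq:problem_reg} gives
\[
    \calF(u_{\gamma_n})+\calG(u_{\gamma_n}) \le \calF(u_{\gamma_n})+\calG(u_{\gamma_n})+\frac{\gamma_n}2\norm{u_{\gamma_n}}_{L^2}^2 \le \calF(u^*)+\calG(u^*)+\frac{\gamma_n}2\norm{u^*}_{L^2}^2.
\]
Passing to the limit inferior and using weak lower semicontinuity of $\calF$ (Assumption \eqref{ass:a1}) and of $\calG$ (convex and lower semicontinuous, hence weakly lower semicontinuous), together with $\gamma_n\norm{u^*}_{L^2}^2\to 0$, yields $\calF(\bar u)+\calG(\bar u)\le \calF(u^*)+\calG(u^*)$, so that $\bar u$ is indeed a global minimizer.

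For the strong convergence I would test the regularized problem at $\bar u$ itself. Since $\bar u$ is now known to be a global minimizer, optimality of $u_{\gamma_n}$ for \eqref{eq:problem_reg} together with global optimality of $\bar u$ for \eqref{eq:problem} combine, after rearrangement, to
\[
    \frac{\gamma_n}2\norm{u_{\gamma_n}}_{L^2}^2 \le \calF(\bar u)+\calG(\bar u)-\calF(u_{\gamma_n})-\calG(u_{\gamma_n})+\frac{\gamma_n}2\norm{\bar u}_{L^2}^2 \le \frac{\gamma_n}2\norm{\bar u}_{L^2}^2,
\]
and dividing by $\gamma_n/2>0$ yields $\norm{u_{\gamma_n}}_{L^2}\le\norm{\bar u}_{L^2}$. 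Combined with weak lower semicontinuity of the norm, this forces $\norm{u_{\gamma_n}}_{L^2}\to\norm{\bar u}_{L^2}$, and weak convergence together with convergence of norms in the Hilbert space $L^2(\Omega)$ gives strong convergence. The same comparison applies verbatim to any subsequence converging weakly to any global minimizer, which settles the ``furthermore'' claim.

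The step I expect to be the main obstacle is precisely this last one. Because the explicit form $\calG=\int_\Omega g_\Gamma$ is only piecewise affine (see \eqref{eq:g_gamma}) and hence not strongly convex, the coercivity of $\calG$ alone cannot promote weak to strong convergence, and one cannot argue directly through the limiting objective. The decisive observation is that the term $\frac\gamma2\norm{\cdot}_{L^2}^2$, although it vanishes from the limit problem, still enforces the a priori bound $\norm{u_{\gamma_n}}_{L^2}\le\norm{\bar u}_{L^2}$, which supplies exactly the norm convergence that the radial uniform convexity of the Hilbert space then converts into strong convergence.
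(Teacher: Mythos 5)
Your proof is correct and follows essentially the same route as the paper: extract a weakly convergent subsequence in the bounded set $U$, identify the limit as a global minimizer via weak lower semicontinuity of $\calF+\calG$ and the vanishing of the regularization term, and then use the two-sided optimality comparison at $\bar u$ to get $\norm{u_{\gamma_n}}_{L^2}\leq\norm{\bar u}_{L^2}$, which together with weak convergence and weak lower semicontinuity of the norm yields strong convergence in the Hilbert space $L^2(\Omega)$. The only cosmetic difference is that you test the regularized problem against a fixed minimizer $u^*$ (requiring \cref{thm:existence}), whereas the paper compares against arbitrary $u\in U$, but the argument is the same.
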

\begin{proof}
    Since $U$ is bounded, the set $\{u_{\gamma}\}_{\gamma>0}$ contains a subsequence   $\{u_{\gamma_n}\}_{n\in\N}$ with $\gamma_n\to 0$ converging weakly to some $\bar u$. Furthermore, it follows that $\lim_{n\to\infty} \frac{\gamma_n}2\norm{u_{\gamma_n}}^2_{L^2(\Omega)} =0$.
    By the weak lower semicontinuity of $\calJ:=\calF+\calG$ and the optimality of $u_{\gamma_n}$, we thus have for any $u\in U$ that
    \begin{equation}
        \begin{aligned}
            \calJ(\bar u)  \leq \lim\inf_{n\to\infty}  \calJ(u_{\gamma_n})   &= \lim\inf_{n\to\infty}  \calJ(u_{\gamma_n}) +\frac{\gamma_n}2 \norm{u_{\gamma_n}}^2_{L^2(\Omega)}\\
                                                                             &\leq   \calJ(u) +\lim_{n\to\infty}  \frac{\gamma_n}2 \norm{u}^2_{L^2(\Omega)} =  \calJ(u),
        \end{aligned}
    \end{equation}
    i.e., $\bar u$ is a global minimizer of \eqref{eq:problem}.

    To show strong convergence, it suffices to show $\lim\sup_{n\to\infty} \norm{u_{\gamma_n}} \leq \norm{\bar u}$. This follows from
    \begin{equation}
        \calJ(u_{\gamma_n}) + \frac{\gamma_n}2 \norm{u_{\gamma_n}}^2_{L^2(\Omega)} \leq \calJ(\bar u) +  \frac{\gamma_n}2 \norm{\bar u}^2_{L^2(\Omega)} \leq \calJ(u_{\gamma_n}) +  \frac{\gamma_n}2 \norm{\bar u}^2_{L^2(\Omega)}
    \end{equation}
    for every $n\in\N$ due to the optimality of $u_\gamma$ and $\bar u$. Hence, $\norm{u_{\gamma_n}}_{L^2(\Omega)}\to\norm{\bar u}_{L^2(\Omega)}$, which together with weak convergence implies strong convergence in the Hilbert space $L^2(\Omega)$ of the subsequence.
\end{proof}

Arguing as in the proof of \cref{thm:optsys}, we obtain the abstract first-order necessary optimality conditions
\begin{equation}
    \left\{\begin{aligned}
            -p_\gamma &= \calF'(u_\gamma),\\
            u_\gamma &\in \partial(\calG_\gamma)^*(p_\gamma),
    \end{aligned}\right.
\end{equation}
where
\begin{equation}
    \calG_\gamma(u) := \calG(u) + \frac\gamma2 \norm{u}_{L^2(\Omega)}^2.
\end{equation}
We now use that $(\calG+\frac\gamma2 \norm{\cdot}_{L^2(\Omega)}^2)^*$ is equal to the infimal convolution of $\calG^*$ and $\frac1{2\gamma} \norm{\cdot}_{L^2(\Omega)}^2$, which in turn coincides with the Moreau envelope of $\calG^*$; see, e.g., \cite[Proposition 13.21]{Bauschke:2011}. Furthermore, the Moreau envelope is Fréchet-differentiable with Lipschitz-continuous gradient which coincides with the Moreau--Yosida regularization $(\partial \calG^*)_\gamma$ of $\partial \calG^*$; see, e.g., \cite[Proposition 12.29]{Bauschke:2011}.
We can therefore make use of the pointwise characterization of $H_\gamma:=(\partial\calG^*)_\gamma=\partial(\calG_\gamma)^*$ from \cite[Appendix~A.2]{CIK:2014},  assuming again that \eqref{eq:cond_beta} holds, to obtain
\begin{equation}\label{eq:hgamma}
    [H_\gamma(p)](x) =
    \begin{cases}
        u_i & p(x)\in Q^\gamma_i,\qquad 1\leq i\leq d,\\
        \tfrac1\gamma\left(p(x)-\tfrac\alpha2(u_i+u_{i+1})\right) & p(x) \in Q^\gamma_{i,i+1},\quad 1\leq i < d.
    \end{cases}
\end{equation}
where
\begin{align}
    Q_1^\gamma &= \set{q}{q< \tfrac\alpha2\left(\left(1+ \tfrac{2\gamma}{\alpha}\right)u_1+u_2\right)},\\
    Q_i^\gamma &= \set{q}{\tfrac\alpha2\left(u_{i-1} + \left(1 + \tfrac{2\gamma}{\alpha}\right)u_i\right) < q < \tfrac\alpha2\left(\left(1+ \tfrac{2\gamma}{\alpha}\right)u_i+u_{i+1}\right)}
    \quad \text{ for } 1<i<d,\\
    Q_d^\gamma &= \set{q}{\tfrac\alpha2\left(u_{d-1} + \left(1 + \tfrac{2\gamma}{\alpha}\right)u_d\right) < q},\\
    Q_{i,i+1}^\gamma &= \set{q}{\tfrac\alpha2\left(\left(1 + \tfrac{2\gamma}{\alpha}\right)u_i+u_{i+1}\right) \leq q \leq \tfrac\alpha2\left(u_i+\left(1+ \tfrac{2\gamma}{\alpha}\right)u_{i+1}\right)} \quad\text{for } 1 \leq i < d,
\end{align}
to obtain the explicit primal-dual first-order necessary conditions
\begin{equation}\label{eq:opt_reg}
    \left\{\begin{aligned}
            -p_\gamma &= S'( u_\gamma)^*(S(u_\gamma)-z),\\
            u_\gamma &= H_\gamma(p_\gamma).
    \end{aligned}\right.
\end{equation}
Comparing \eqref{eq:hgamma} to \eqref{eq:kk1}, we observe that the Moreau--Yosida regularization is of local nature, acting along interfaces between regions with different material parameters.

Since $H_\gamma$ is a superposition operator defined by a Lipschitz continuous and piecewise differentiable scalar function, $H_\gamma$ is Newton-differentiable from $L^r(\Omega)\to L^2(\Omega)$ for any $r>2$; see, e.g., \cite[Example 8.12]{Kunisch:2008a} or \cite[Theorem 3.49]{Ulbrich:2011}. Its Newton derivative at $p$ in direction $h$ is given pointwise almost everywhere by
\begin{equation}
    [D_N H_\gamma(p)h](x) =
    \begin{cases}
        \frac1\gamma h(x) & \text{if }p(x)\in Q^\gamma_{i,i+1},\quad 1\leq i < d,\\
        0 &\text{else.}
    \end{cases}
\end{equation}

\subsection{Semismooth Newton method}
\label{sec:solution:ssn}

We now wish to apply a semismooth Newton method to \eqref{eq:opt_reg}. For this purpose, we need to argue that $p_\gamma\in V$ for some $V\hookrightarrow L^r(\Omega)$ with $r>2$ and show uniform invertibility of the Newton step. Since the control-to-state mapping is nonlinear, this requires exploiting its concrete structure. We thus directly consider the specific model problems.

\subsubsection{Potential problem}

We first express \eqref{eq:opt_reg} in equivalent form by introducing the state $y_\gamma=S(u_\gamma)\in H^1(\Omega)$, i.e., satisfying for $u=u_\gamma$
\begin{equation}\label{eq:pot_state}
    \left\{\begin{aligned}
            -\Delta y + u y &= f  &\text{ in } \Omega,\\
            \partial_\nu y &= 0&\text{ on } \partial \Omega.
    \end{aligned}\right.
\end{equation}
In the following, we assume that $\Omega\subset \R^N$, $N \le 3$, is sufficiently regular such that for any $f\in L^2(\Omega)$ and any $u\in U=U_M:=\set{u\in L^2(\Omega)}{u_1\leq u\leq M \text{ a.e.}}$, the solution to \eqref{eq:pot_state} satisfies $y\in H^2(\Omega)$ together with the uniform a priori estimate
\begin{equation}
    \label{eq:pot_apriori}
    \norm{y}_{H^2(\Omega)} \leq C_M \norm{f}_{L^2(\Omega)}.
\end{equation}
We also consider for given $u\in U_M$ and $y\in H^2(\Omega)$ the adjoint equation
\begin{equation}\label{eq:pot_adjoint}
    \left\{\begin{aligned}
            -\Delta w + u w &= -(y - z) &\text{ in } \Omega,\\
            \partial_\nu w &= 0&\text{ on } \partial \Omega,
    \end{aligned}\right.
\end{equation}
whose solution $w\in H^2(\Omega)$ also satisfies the uniform a priori estimate \eqref{eq:pot_apriori}.
Due to the Sobolev embedding theorem, we have that the solutions $y$ and $w$ are also bounded in $L^\infty(\Omega)$ uniformly with respect to $u\in U_M$.

By standard Lagrangian calculus, we can now write $p_\gamma = y_\gamma w_\gamma$, where $w_\gamma\in H^1(\Omega)$ is the solution to \eqref{eq:pot_adjoint} with $u=u_\gamma$ and $y=y_\gamma$.
We further eliminate $u_\gamma$ using the second equation of \eqref{eq:opt_reg} to obtain the reduced system
\begin{equation}
    \label{eq:opt_reg_potential}
    \left\{\begin{aligned}
            -\Delta w_\gamma + H_\gamma(-y_\gamma w_\gamma) w_\gamma + y_\gamma &= z,\\
            -\Delta y_\gamma +  H_\gamma(-y_\gamma w_\gamma) y_\gamma &= f.
    \end{aligned}\right.
\end{equation}
Due the regularity of $y_\gamma$ and $p_\gamma$, we can consider this as an equation in $L^2(\Omega)\times L^2(\Omega)$ for $(y_\gamma,p_\gamma)\in H^2(\Omega)\times H^2(\Omega)$. By the Sobolev embedding theorem, we have $y_\gamma w_\gamma\in L^\infty(\Omega)$, and hence that the system \eqref{eq:opt_reg_potential} is semismooth. By the chain rule, the Newton derivative of $H_\gamma(-yw)$ with respect to $y$ in direction $\delta y$ is given by
\begin{equation}
    D_{N,y} H_\gamma (-yw)\delta y = -\frac1\gamma \chi(-yw)\, w\, \delta y,
\end{equation}
where $\chi(-yw)$ is the characteristic function of the inactive set
\begin{equation}
    \calS_\gamma(-yw):=\bigcup_{i=1}^{d-1} \set{x\in\Omega}{-y(x)w(x)\in Q_{i,i+1}^\gamma}.
\end{equation}
Similarly,
\begin{equation}
    D_{N,w} H_\gamma (-yw)\delta w = -\frac1\gamma \chi(-yw)\, y\, \delta w.
\end{equation}
For convenience, we set $\chi^k := \chi(-y^kw^k)$.
A Newton step consists in solving
\begin{equation}
    \label{eq:newton_full_pot}
    \begin{multlined}[t]
        \begin{pmatrix}
            1-\tfrac1\gamma \chi^k (w^k)^2& -\Delta + H_\gamma(-y^kw^k)-\tfrac1\gamma \chi^ky^kw^k\\[1.5ex]
            -\Delta + H_\gamma(-y^kw^k) -\tfrac1\gamma \chi^ky^kw^k & -\tfrac1\gamma \chi^k (y^k)^2
        \end{pmatrix}
        \begin{pmatrix}
            \delta y \\[1.5ex] \delta w
        \end{pmatrix}
        \\= -
        \begin{pmatrix}
            -\Delta w^k + H_\gamma(-y^kw^k) w^k + y^k- z\\[1.5ex]
            -\Delta y^k + H_\gamma(-y^kw^k) y^k - f
        \end{pmatrix}
    \end{multlined}
\end{equation}
and setting $y^{k+1}=y^k+\delta y$ and $w^{k+1}=w^k+\delta w$.

To show local superlinear convergence, it remains to prove uniformly bounded invertibility of \eqref{eq:newton_full_pot}.
We proceed in several steps. First, we consider the off-diagonal terms in \eqref{eq:newton_full_pot}.
\begin{lemma}\label{lem:B}
    For any $\gamma>0$ and $y,w\in H^2(\Omega)$, the linear operator $B:H^2(\Omega)\to L^2(\Omega)$,
    \begin{equation}
        B =  -\Delta + H_\gamma(-yw)-\tfrac1\gamma \chi(-yw) yw,
    \end{equation}
    is uniformly invertible, and there exists a constant $C>0$ independent of $y,w$ such that
    \begin{equation}
        \norm{B^{-1}}_{\calL(L^2(\Omega),H^2(\Omega))} \leq C.
    \end{equation}
\end{lemma}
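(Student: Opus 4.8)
The plan is to show that $B$ is a standard uniformly elliptic operator $-\Delta + c$ with homogeneous Neumann conditions whose zeroth-order coefficient
\[
    c := H_\gamma(-yw) - \tfrac1\gamma\chi(-yw)\,yw
\]
is trapped between two positive constants that do not depend on $y$ and $w$, and then to invoke the elliptic regularity estimate already underlying \eqref{eq:pot_apriori}. The decisive observation is that, although the correction term $-\frac1\gamma\chi(-yw)\,yw$ looks as if it might destroy positivity or boundedness of the coefficient, on the inactive set it combines with $H_\gamma$ into a (large, $O(\gamma^{-1})$) \emph{positive} contribution.

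First I would establish the two-sided pointwise bound on $c$ by a case distinction. Writing $p=-yw$, on an active interval $p(x)\in Q_i^\gamma$ we have $\chi(p)=0$ and $c=H_\gamma(p)=u_i\in[u_1,u_d]$ by \eqref{eq:hgamma}; on an inactive interval $p(x)\in Q_{i,i+1}^\gamma$ we have $\chi(p)=1$, and using $yw=-p$ together with the explicit form $H_\gamma(p)=\frac1\gamma(p-\frac\alpha2(u_i+u_{i+1}))$ one gets
\[
    c = H_\gamma(p) + \tfrac1\gamma p = \tfrac1\gamma\bigl(2p - \tfrac\alpha2(u_i+u_{i+1})\bigr).
\]
Inserting the endpoints $p\in\bigl[\frac\alpha2(u_i+u_{i+1})+\gamma u_i,\ \frac\alpha2(u_i+u_{i+1})+\gamma u_{i+1}\bigr]$ of $Q_{i,i+1}^\gamma$ shows that on the inactive set $c\in\bigl[\frac{\alpha}{2\gamma}(u_i+u_{i+1})+2u_i,\ \frac{\alpha}{2\gamma}(u_i+u_{i+1})+2u_{i+1}\bigr]$. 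Combining both cases and using the ordering together with the positivity $u_1>0$ that is implicit in \eqref{eq:pot_apriori} yields
\[
    u_1 \le c(x) \le \tfrac{\alpha}{2\gamma}(u_{d-1}+u_d)+2u_d =: C_\gamma \qquad\text{for a.e.\ } x\in\Omega,
\]
with both bounds independent of $y$ and $w$.

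Given these bounds, the bilinear form $a(v,\phi)=\int_\Omega \nabla v\cdot\nabla\phi + c\,v\phi\,dx$ associated with $B$ under Neumann conditions is bounded on $H^1(\Omega)$ and coercive with constant $\min\{1,u_1\}>0$, both uniformly in $y,w$; Lax--Milgram then gives a unique weak solution $v\in H^1(\Omega)$ of $Bv=f$ for every $f\in L^2(\Omega)$. To upgrade to $H^2$, I would rewrite the equation as $-\Delta v = f - c\,v$ with $\partial_\nu v=0$ and apply the very elliptic regularity assumed for the Neumann Laplacian in \eqref{eq:pot_apriori}, now with the zeroth-order coefficient $c$ in place of $u$ and the range $[u_1,C_\gamma]$ in place of $[u_1,M]$. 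Since that estimate is uniform over all admissible coefficients of a given $L^\infty$ size, it produces $\norm{v}_{H^2(\Omega)}\le C\norm{f}_{L^2(\Omega)}$ with $C$ depending only on $\Omega$, $\gamma$, $\alpha$ and the $u_i$, but not on $y,w$ — precisely the asserted bound on $\norm{B^{-1}}_{\calL(L^2(\Omega),H^2(\Omega))}$.

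The main obstacle is the first step: one must verify that the superposition of $H_\gamma$ with the correction $-\frac1\gamma\chi(-yw)\,yw$ keeps $c$ between positive, $y,w$-independent constants, which is the only place where the explicit piecewise structure of $H_\gamma$ from \eqref{eq:hgamma} and the definition of the inactive sets $Q_{i,i+1}^\gamma$ (equivalently $\calS_\gamma$) is used. Once the coefficient bounds are secured, the invertibility and the uniform norm estimate are routine consequences of coercivity and the assumed $H^2$ regularity of the Neumann problem.
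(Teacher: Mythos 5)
Your proposal is correct and takes essentially the same approach as the paper: the paper's proof likewise observes that $H_\gamma(-yw)\in[u_1,u_d]$ pointwise while on the inactive set $\tfrac1\gamma(-yw)$ lies between $\tfrac\alpha{2\gamma}(u_1+u_2)+u_1$ and $\tfrac\alpha{2\gamma}(u_{d-1}+u_d)+u_d$, concludes that the zeroth-order coefficient belongs to $U_M$ with $M=(2+\tfrac\alpha\gamma)u_d$ (matching your $C_\gamma$ up to a harmless enlargement), and then invokes the uniform a priori estimate \eqref{eq:pot_apriori}. Your explicit Lax--Milgram step is subsumed in the paper by the well-posedness built into that assumption, so the two arguments differ only in presentation.
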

\begin{proof}
    We first note that by definition, $[H_\gamma(p)](x) \in [u_1,u_d]$ for any $p\in L^2(\Omega)$. Furthermore, on the inactive set $S_\gamma(-yw)$ we have, again by definition,
    \begin{equation}
        u_1 \leq \frac\alpha{2\gamma}(u_1+u_2)+u_1 \leq \frac1\gamma(-yw)(x) \leq  \frac\alpha{2\gamma}(u_{d-1}+u_d)+u_d \leq (1+\tfrac\alpha\gamma)u_d.
    \end{equation}
    Thus, $H_\gamma(-yw)-\tfrac1\gamma \chi(-yw) yw\in U_M$ for $M=(2+\tfrac\alpha\gamma)u_d$, and the claim follows from the a priori estimate \eqref{eq:pot_apriori}.
\end{proof}

\begin{proposition}\label{lem:newton_bound}
    For $\gamma>0$, let $(y_\gamma,w_\gamma)\in H^2(\Omega)\times H^2(\Omega)$ be a solution to \eqref{eq:opt_reg_potential} with $w_\gamma$ satisfying $\|w_\gamma\|_{L^\infty(\Omega)}< \sqrt{\gamma}$. Furthermore, let $U(y_\gamma)$ be a bounded neighborhood of $y_\gamma$ in $H^2(\Omega)$, and let $U(w_\gamma)$ be a bounded neighborhood of $w_\gamma$ in $H^2(\Omega)$ such that $\|w\|_{L^\infty(\Omega)}\leq \sqrt{\gamma}$ for any $w\in U(w_\gamma)$.
    Then there exists a constant $C>0$ such that for any $(y,w)\in U(y_\gamma)\times U(w_\gamma)$ and any $r_1,r_2\in L^2(\Omega)$, there exists a unique solution $(\delta y,\delta w)\in H^2(\Omega)\times H^2(\Omega)$ to
    \begin{equation}\label{eq:newton_step_unif}
        \begin{pmatrix}
            1-\tfrac1\gamma \chi(-yw) w^2& B\\[1ex]
            B & -\tfrac1\gamma \chi(-yw) y^2
        \end{pmatrix}
        \begin{pmatrix}
            \delta y \\[1ex] \delta w
        \end{pmatrix}
        =
        \begin{pmatrix}
            r_1\\[1ex]
            r_2
        \end{pmatrix}
    \end{equation}
    satisfying
    \begin{equation}
        \norm{\delta y}_{H^2(\Omega)}+\norm{\delta w}_{H^2(\Omega)}\leq C \left(\norm{r_1}_{L^2(\Omega)}+\norm{r_2}_{L^2(\Omega)}\right).
    \end{equation}
\end{proposition}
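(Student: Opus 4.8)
The plan is to reduce \cref{lem:newton_bound} to a single a priori estimate for \eqref{eq:newton_step_unif} that is uniform in $(y,w)$, from which uniqueness and the asserted bound follow at once, and to obtain existence by a Fredholm argument. Throughout I abbreviate $a:=1-\tfrac1\gamma\chi(-yw)\,w^2$ and $c:=\tfrac1\gamma\chi(-yw)\,y^2$, so that \eqref{eq:newton_step_unif} reads $a\,\delta y + B\,\delta w = r_1$ and $B\,\delta y - c\,\delta w = r_2$. Three structural facts drive everything: first, by \cref{lem:B}, $B$ is boundedly invertible with $\norm{B^{-1}}_{\calL(L^2(\Omega),H^2(\Omega))}\le C$ uniformly in $(y,w)$; second, $B$ is self-adjoint on $L^2(\Omega)$, being the Neumann realization of $-\Delta$ plus a real, bounded potential, acting on $H^2$ functions with homogeneous Neumann data; and third, $0\le a\le 1$ -- here the hypothesis $\norm{w}_{L^\infty(\Omega)}\le\sqrt\gamma$ is exactly what guarantees $\tfrac1\gamma\chi w^2\le 1$ -- while $c\ge 0$ is bounded in $L^\infty(\Omega)$ uniformly over the bounded set $U(y_\gamma)$.

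The first step is an energy identity. Testing the first equation with $\delta y$ and the second with $\delta w$ in $L^2(\Omega)$ and subtracting, the self-adjointness of $B$ makes the cross terms $\scalprod{B\delta w,\delta y}$ and $\scalprod{B\delta y,\delta w}$ cancel, leaving
\begin{equation*}
    \scalprod{a\,\delta y,\delta y} + \scalprod{c\,\delta w,\delta w} = \scalprod{r_1,\delta y} - \scalprod{r_2,\delta w}.
\end{equation*}
Since $a,c\ge 0$, the left-hand side equals $P^2+Q^2$ with $P^2:=\int_\Omega a|\delta y|^2$ and $Q^2:=\int_\Omega c|\delta w|^2$, and is thus controlled by $\norm{r_1}_{L^2}\norm{\delta y}_{L^2}+\norm{r_2}_{L^2}\norm{\delta w}_{L^2}$.

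Next I would convert the two equations into $H^2$ bounds through $B^{-1}$. From the first equation $\delta w=B^{-1}(r_1-a\,\delta y)$, and since $0\le a\le 1$ gives $a^2\le a$ and hence $\norm{a\,\delta y}_{L^2}\le P$, the uniform bound on $B^{-1}$ yields $\norm{\delta w}_{H^2}\le C(\norm{r_1}_{L^2}+P)$. From the second equation $\delta y=B^{-1}(r_2+c\,\delta w)$, and since $c^2=\tfrac1\gamma y^2 c\le \tfrac1\gamma\norm{y}_{L^\infty}^2\,c$ gives $\norm{c\,\delta w}_{L^2}\le\kappa Q$ with $\kappa:=\gamma^{-1/2}\norm{y}_{L^\infty(\Omega)}$, I obtain $\norm{\delta y}_{H^2}\le C(\norm{r_2}_{L^2}+\kappa Q)$. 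Inserting these two bounds into the energy identity and using Young's inequality to absorb $P$ and $Q$ gives $P^2+Q^2\le C(1+\kappa^2)(\norm{r_1}_{L^2}+\norm{r_2}_{L^2})^2$, and feeding this back produces the claimed estimate for $\norm{\delta y}_{H^2}+\norm{\delta w}_{H^2}$. Crucially, the constant depends only on the bound from \cref{lem:B}, on $\gamma$, and on $\sup_{y\in U(y_\gamma)}\norm{y}_{L^\infty}$ (finite by the embedding $H^2(\Omega)\hookrightarrow L^\infty(\Omega)$ for $N\le 3$), hence is uniform over the neighborhoods. In particular, taking $r_1=r_2=0$ shows the system is injective.

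Finally, for existence I would write the operator in \eqref{eq:newton_step_unif} as the sum of the anti-diagonal part built from $B$, which is invertible on $H^2(\Omega)\times H^2(\Omega)\to L^2(\Omega)\times L^2(\Omega)$ by \cref{lem:B}, and the diagonal multiplication part, which is compact there because multiplication by a bounded function followed by the embedding $H^2(\Omega)\hookrightarrow L^2(\Omega)$ is compact; the Fredholm alternative then upgrades the injectivity just obtained to bijectivity. I expect the main obstacle to be precisely that the off-diagonal coupling is not small -- $c\sim\gamma^{-1}y^2$ cannot be treated as a perturbation, so a Neumann-series or contraction argument on the diagonal blocks is hopeless. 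The resolution is to lean entirely on the self-adjoint, sign-definite structure encoded in the energy identity, which both absorbs the dangerous $c$-term and, as a bonus, renders the whole estimate completely insensitive to the discontinuous dependence of the active and inactive sets (through $\chi$) on $(y,w)$.
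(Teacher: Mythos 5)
Your proof is correct, but it takes a genuinely different route from the paper's. The paper never forms your energy identity; instead it performs a constructive Schur-complement-type reduction: writing $\omega=\calS_\gamma(-yw)$ and $h=1-\tfrac1\gamma\chi(-yw)w^2$, it applies Lax--Milgram on $L^2(\omega)$ to the bilinear form
\begin{equation}
    a_\omega(w_1,w_2) = \left(w_1,w_2\right)_{L^2(\omega)} + \left(h\,B^{-1}(\tfrac1{\sqrt\gamma}yE_\omega w_1),\,B^{-1}(\tfrac1{\sqrt\gamma}yE_\omega w_2)\right)_{L^2(\Omega)}
\end{equation}
(with $E_\omega$ extension by zero), whose ellipticity rests on exactly the same observation that drives your argument --- the hypothesis $\norm{w}_{L^\infty(\Omega)}\leq\sqrt\gamma$ makes $h\geq 0$ --- to produce an auxiliary unknown $\delta\tilde w$ on the inactive set; it then reconstructs $\delta y$ and $\delta w$ by two successive applications of $B^{-1}$ and verifies that the pair solves \eqref{eq:newton_step_unif}, the auxiliary variable being $\tfrac1{\sqrt\gamma}y\,\delta w$ on $\omega$. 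Existence, uniqueness and the uniform bound thus come out of a single Lax--Milgram construction, with no recourse to compactness. Your proof splits the work differently: the energy identity, which exploits the self-adjointness of the Neumann realization of $B$ (something the paper also uses, implicitly, when it moves $B^{-1}$ across inner products in its verification step), gives injectivity and the uniform a priori bound; the Fredholm alternative then supplies existence, using that the diagonal part is compact --- where, to be precise, the correct factorization is the compact embedding $H^2(\Omega)\hookrightarrow L^2(\Omega)$ \emph{followed by} multiplication by the $L^\infty$ functions, since multiplication by $a$ or $c$ does not act on $H^2(\Omega)$ itself. What the paper's route buys is constructiveness (the reduced elliptic problem on $\omega$ is essentially what a numerical realization of the Newton step would solve) and independence from compact embeddings; what yours buys is a shorter and more transparent estimate that isolates exactly where the smallness of $w$ enters and is, as you note, insensitive to the discontinuous dependence of $\chi(-yw)$ on $(y,w)$. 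One shared caveat: both arguments require $H^2(\Omega)$ to be understood as incorporating the homogeneous Neumann condition, since otherwise $B$ has a nontrivial kernel and uniqueness fails; you make this explicit, while the paper leaves it implicit in \cref{lem:B}.
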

\begin{proof}
    We exploit the invertibility of $B$ to obtain the required bounds on $\delta y$ and $\delta w$. For the sake of convenience, we set  $\omega := \calS_\gamma(-yw)$ and $h:=1-\frac1\gamma\chi(-yw)w^2$.
    As a first step, we introduce the following bilinear form on $L^2(\omega)\times L^2(\omega)$:
    \begin{equation}
        a_\omega(w_1,w_2) := \left( w_1,w_2\right)_{L^2(\omega)} + \left(h B^{-1}(\tfrac1{\sqrt{\gamma}}yE_{\omega} w_1), B^{-1}(\tfrac1{\sqrt{\gamma}}yE_{\omega} w_2)\right)_{L^2(\Omega)},
    \end{equation}
    where $E_{\omega}$ denotes the extension by zero operator from $\omega$ to $\Omega$.
    Due to the assumption on $w$, we have that $h$ ia nonnegative. Thus the second term on the right hand side of the above equation is non-negative as well. Hence $a_\omega$ is symmetric, continuous and elliptic on $L^2(\omega)$ (uniformly on the set of admissible $(y,w)$). This implies the existence of a unique solution $\delta\tilde w\in L^2(\omega)$ to
    \begin{equation}
        \label{eq:newton_step_aux}
        a_\omega(\delta\tilde w, \tilde w) = \left(\tfrac{1}{\sqrt{\gamma}}y B^{-1}\left(r_1 -h B^{-1}r_2\right),\tilde w\right)_{L^2(\omega)}
        \quad\text{for all }\tilde w\in L^2(\omega)
    \end{equation}
    satisfying
    \begin{equation}
        \norm{\delta\tilde w}_{L^2(\omega)}\leq C \left(\norm{r_1}_{L^2(\Omega)}+ \norm{r_2}_{L^2(\Omega)}\right).
    \end{equation}
    (Here and below, $C$ is a generic constant that may change its value between occurences but does not depend on $y$ and $w$.)

    Next we consider the auxiliary equation
    \begin{equation}
        \label{eq:newton_step_aux2}
        B\delta y 
        = r_2 + \tfrac{1}{\sqrt{\gamma}} y E_{\omega}\delta \tilde w.
    \end{equation}
    From \cref{lem:B} we obtain a unique solution $\delta y \in H^2(\Omega)$ to \eqref{eq:newton_step_aux2} satisfying
    \begin{equation}
        \norm{\delta y}_{H^2(\Omega)} \leq C \left(\norm{r_2}_{L^2(\Omega)} + \tfrac{1}{\sqrt{\gamma}}\norm{\delta\tilde w}_{L^2(\omega)}\right) \leq C
        \left(\norm{r_1}_{L^2(\Omega)}+ \norm{r_2}_{L^2(\Omega)}\right),
    \end{equation}
    using that $y\in U(y_\gamma)$ is uniformly bounded in $L^\infty(\Omega)$. Given $\delta y\in H^2(\Omega)$, the first equation of \eqref{eq:newton_step_unif} now admits a unique solution $\delta w\in H^2(\Omega)$ satisfying
    \begin{equation}
        \norm{\delta w}_{H^2(\Omega)} \leq C  \left(\norm{r_1}_{L^2(\Omega)} + \norm{\delta y}_{L^2(\Omega)}\right) \leq C
        \left(\norm{r_1}_{L^2(\Omega)}+ \norm{r_2}_{L^2(\Omega)}\right),
    \end{equation}
    using the uniform boundedness of $w\in U(w_\gamma)$ in $L^\infty(\Omega)$.

    To complete the proof, it remains to verify that $\delta w = \frac{1}{\sqrt{\gamma}} y \delta\tilde w$ on $\omega$. For this purpose we note that by the first of equation of \eqref{eq:newton_step_unif} and \eqref{eq:newton_step_aux2},
    \begin{equation}
        \delta w + B^{-1}\left(h B^{-1} \left(\frac{1}{\sqrt{\gamma}}yE_{\omega}\delta\tilde w\right)\right) = B^{-1}\left(r_1 -h B^{-1}r_2\right).
    \end{equation}
    Taking the inner product of this equation in $L^2(\omega)$ with $\frac1\gamma y E_{\omega} w_2$ for arbitrary $w_2\in L^2(\omega)$ and subtracting \eqref{eq:newton_step_aux}, we arrive at
    \begin{equation}
        \left(\tfrac1\gamma y \delta w - \delta\tilde w,w_2\right)_{L^2(\omega)} = 0\qquad\text{for all }w_2\in L^2(\omega).
    \end{equation}
    Inserting into \eqref{eq:newton_step_aux2} now verifies the second equation of \eqref{eq:newton_step_unif}.
\end{proof}
We remark that according to the a priori estimate \eqref{eq:pot_apriori}, the required smallness of $w_\gamma$ corresponds to smallness of the tracking error $\norm{y_\gamma-z}_{L^2(\Omega)}$.
In the following we give an alternative sufficient condition for the uniform continuous invertibility of the Newton iteration matrix \eqref{eq:newton_step_unif} that does not rely on the smallness of $w_\gamma$. For this purpose, we set $\omega_\gamma := \calS_\gamma(-y_\gamma w_\gamma)$ and define
\begin{equation}
    \partial\omega_\gamma := \bigcup_{i=1}^{d-1} \set{x\in\Omega}{-y_\gamma(x) w_\gamma(x)\in \partial Q_{i,i+1}^\gamma}.
\end{equation}
We also introduce the compact self-adjoint operator
\begin{equation}
    C:L^2(\omega_\gamma)\to L^2(\omega_\gamma), \qquad
    C=\left(B^{-1}(\tfrac1{\sqrt{\gamma}}yE_{\omega_\gamma})\right)^*
    (h_\gamma\Id) \left(B^{-1}(\tfrac1{\sqrt{\gamma}}yE_{\omega_\gamma})\right),
\end{equation}
where $h_\gamma = 1-\tfrac1\gamma \chi(-y_\gamma w_\gamma) w_\gamma^2$ and $B=B(y_\gamma,w_\gamma)$.
We require the following two assumptions. 
\begin{enumerate}[label= (\textsc{h}\arabic{enumi}), ref=\textsc{h}\arabic{enumi},align=left]
    \item $-1\notin\sigma(C)$,\label{ass:h1}
    \item $|\partial\omega_\gamma| = 0.$\label{ass:h2}
\end{enumerate}
\begin{proposition}\label{lem:newton_bound2}
    For $\gamma>0$, let $(y_\gamma,w_\gamma)\in H^2(\Omega)\times H^2(\Omega)$ be a solution to \eqref{eq:opt_reg_potential} satisfying \eqref{ass:h1} and \eqref{ass:h2}. Then there exists a neighborhood  $U(y_\gamma)\times U(w_\gamma)$ of $(y_\gamma,w_\gamma)$ in $H^2(\Omega)\times H^2(\Omega)$ such that the conclusion of \cref{lem:newton_bound} holds.
\end{proposition}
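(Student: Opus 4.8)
The plan is to retain the Schur-type reduction carried out in the proof of \cref{lem:newton_bound}, but to replace the ellipticity argument for the reduced bilinear form—which there required the sign condition $h_\gamma\geq 0$ furnished by $\|w_\gamma\|_{L^\infty(\Omega)}<\sqrt\gamma$—by a Fredholm argument based on \eqref{ass:h1}. The starting observation is that, by the definition of $C$ and of the adjoint, the bilinear form $a_{\omega_\gamma}$ from that proof can be written as
\begin{equation}
    a_{\omega_\gamma}(w_1,w_2) = \left((\Id+C)w_1,w_2\right)_{L^2(\omega_\gamma)}.
\end{equation}
The reconstruction in that proof of $\delta y$ from \eqref{eq:newton_step_aux2} and of $\delta w$ from the first row of \eqref{eq:newton_step_unif} uses only the boundedness of $h_\gamma$, not its sign; hence the bounded invertibility of the full Newton matrix at $(y_\gamma,w_\gamma)$ follows once the reduced equation \eqref{eq:newton_step_aux} is uniquely solvable with bounds, i.e.\ once $\Id+C$ is boundedly invertible on $L^2(\omega_\gamma)$.

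First I would verify that $C$ is compact and self-adjoint. Setting $T:=B^{-1}(\tfrac1{\sqrt\gamma}y_\gamma E_{\omega_\gamma})$, which is bounded from $L^2(\omega_\gamma)$ into $H^2(\Omega)$ by \cref{lem:B} together with the uniform $L^\infty$-bound on $y_\gamma$, the compact Rellich embedding $H^2(\Omega)\hookrightarrow L^2(\Omega)$ shows that $T$ is compact as a map into $L^2(\Omega)$; hence $C=T^*(h_\gamma\Id)T$ is compact, and it is self-adjoint since $h_\gamma$ is real-valued. By the Fredholm alternative for compact self-adjoint operators, $\Id+C$ is boundedly invertible precisely when $-1\notin\sigma(C)$, which is \eqref{ass:h1}. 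This establishes bounded invertibility of the Newton matrix exactly at the solution $(y_\gamma,w_\gamma)$.

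It remains to propagate this to a neighborhood, and this is where \eqref{ass:h2} enters; I expect it to be the main obstacle. I would view the Newton matrix in \eqref{eq:newton_step_unif} as an operator $\mathcal{M}(y,w)\in\calL(H^2(\Omega)^2,L^2(\Omega)^2)$ and show that it depends continuously on $(y,w)$ at $(y_\gamma,w_\gamma)$ in the operator norm, so that invertibility persists on a neighborhood by the openness of the set of boundedly invertible operators (with a uniform bound on the inverse). The delicate point is the dependence of the coefficients on the \emph{domain} $\omega=\calS_\gamma(-yw)$, i.e.\ on the characteristic function $\chi(-yw)$. Since $N\leq 3$ gives $H^2(\Omega)\hookrightarrow L^\infty(\Omega)$, an $H^2$-perturbation of $(y,w)$ yields an $L^\infty$-perturbation of the product $-yw$; assumption \eqref{ass:h2}, namely $|\partial\omega_\gamma|=0$, then ensures that for almost every $x$ the value $-y_\gamma(x)w_\gamma(x)$ lies in the interior of one of the intervals $Q_i^\gamma$ or $Q_{i,i+1}^\gamma$, so that $\chi(-yw)(x)=\chi(-y_\gamma w_\gamma)(x)$ once $(y,w)$ is close enough. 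Dominated convergence then gives $\chi(-yw)\to\chi(-y_\gamma w_\gamma)$ in $L^p(\Omega)$ for every $p<\infty$.

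With this in hand, the coefficient convergence is routine: the zeroth-order coefficient $H_\gamma(-yw)-\tfrac1\gamma\chi(-yw)yw$ of $B$ and the diagonal coefficients $1-\tfrac1\gamma\chi(-yw)w^2$ and $-\tfrac1\gamma\chi(-yw)y^2$ all converge in $L^2(\Omega)$ (combining the $L^\infty$-convergence of $H_\gamma(-yw)$, $yw$, $y^2$, $w^2$ with the $L^p$-convergence of $\chi$), and a multiplication operator with $L^2$-convergent coefficient converges in $\calL(H^2(\Omega),L^2(\Omega))$ because $\|cv\|_{L^2(\Omega)}\leq\|c\|_{L^2(\Omega)}\|v\|_{L^\infty(\Omega)}\leq C\|c\|_{L^2(\Omega)}\|v\|_{H^2(\Omega)}$. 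Hence $\mathcal{M}(y,w)\to\mathcal{M}(y_\gamma,w_\gamma)$ in operator norm, and the openness argument yields a neighborhood $U(y_\gamma)\times U(w_\gamma)$ on which $\mathcal{M}(y,w)$ is uniformly boundedly invertible. Choosing this neighborhood bounded in $H^2(\Omega)^2$ keeps $y,w$ uniformly bounded in $L^\infty(\Omega)$, so all constants are uniform, which is exactly the conclusion of \cref{lem:newton_bound}.
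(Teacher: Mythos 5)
Your proposal is correct and takes essentially the same route as the paper's own proof: bounded invertibility at $(y_\gamma,w_\gamma)$ is obtained from \eqref{ass:h1} through the Schur-type reduction of \cref{lem:newton_bound} (you spell out the compactness and Fredholm-alternative step that the paper leaves implicit), and the passage to a neighborhood uses operator-norm continuity of the Newton matrix together with openness of the set of boundedly invertible operators, with \eqref{ass:h2} entering exactly through the continuity of $(y,w)\mapsto\chi(-yw)$. The only difference in execution is that you prove this continuity by pointwise a.e.\ convergence plus dominated convergence, whereas the paper uses a quantitative estimate on $\eps$-neighborhoods of the critical set $\partial\omega_\gamma$; both arguments rest on \eqref{ass:h2} and the embedding $H^2(\Omega)\hookrightarrow C(\overline\Omega)$ in the same way.
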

\begin{proof}
    By \eqref{ass:h1} and as a consequence of the proof of \cref{lem:newton_bound}, the system matrix in \eqref{eq:newton_step_unif} is continuously invertible in $(y_\gamma,w_\gamma)$.
    Since the set of continuously invertible operators between Hilbert spaces is open with respect to the topology of the operator norm
    (see, e.g., \cite[Theorem~6.2.3]{Wouk}),
    the claim will be established once we have argued that the system matrix, considered as an operator from $H^2(\Omega)\times H^2(\Omega)$ to $L^2(\Omega)\times L^2(\Omega)$, depends continuously in the operator norm on $(y,w)\in H^2(\Omega)\times H^2(\Omega)$ in a neighborhood of $(y_\gamma,w_\gamma)$.
    For this purpose, we first argue that $p:=-yw\mapsto \chi(p)$ is continuous from $C(\overline\Omega)$ to $L^2(\Omega)$ in a neighborhood of $p_\gamma := -y_\gamma w_\gamma$. For $\eps>0$ sufficiently small, we set
    \begin{equation}
        \partial\calS_\gamma^\eps := \bigcup_{i=1}^{d-1} \set{x\in\Omega}{\mathrm{dist}\left(p_\gamma(x), \partial Q_{i,i+1}^\gamma\right)<\eps}.
    \end{equation}
    The family $\{ \partial\calS_\gamma^\eps \}_{\eps>0}$ is monotone with respect to set inclusion and satisfies
    \begin{equation}
        \lim_{\eps\to 0} \left|\partial\calS_\gamma^\eps\right| = \left|\lim_{\eps\to 0} \partial\calS_\gamma^\eps\right| = |\partial\calS_\gamma| = 0.
    \end{equation}
    For any $\eps>0$ and any $p\in C(\overline\Omega)$ such that $\|p-p_\gamma\|_{C(\overline\Omega)}<\frac\eps2$, we thus have
    \begin{equation}
        \begin{aligned}
            \|\chi(p)-\chi(p_\gamma)\|^2_{L^2(\Omega)} &= \int_{\Omega\setminus\partial\calS_\gamma^\eps} |\chi(p)(x)-\chi(p_\gamma)(x)|^2\,dx +
            \int_{\partial\calS_\gamma^\eps} |\chi(p)(x)-\chi(p_\gamma)(x)|^2\,dx \\
            &= 0 +  \left|\partial\calS_\gamma^\eps\right| \to 0 \qquad \text{for }\eps\to 0,
        \end{aligned}
    \end{equation}
    since $\mathrm{dist}\left(p(x),\partial Q_{i,i+1}^\gamma\right)<\frac\eps2$ on $\Omega\setminus\partial\calS_\gamma^\eps$ due to the choice of $p$. Due to the continuous embedding $H^2(\Omega)\hookrightarrow C(\overline\Omega)$, there exists $\eta=\eta(\eps)$ such that $\|y-y_\gamma\|_{H^2(\Omega)}<\eta$ and $\|w-w_\gamma\|_{H^2(\Omega)}<\eta$ implies $\|yw-y_\gamma w_\gamma\|_{C(\overline\Omega)}<\frac\eps2$. Hence $y w \to \chi(-yw)$ is  continuous from $H^2(\Omega) \times H^2(\Omega)$ to $L^2(\Omega)$.

    In a similar manner, one argues continuity of $H_\gamma$ from $H^2(\Omega)\times H^2(\Omega)$ to $L^2(\Omega)$, since the pointwise case distinction in the definition \eqref{eq:hgamma} can equivalently be expressed via the sum of characteristic functions. It follows from these considerations that the system matrix in \eqref{eq:newton_step_unif} as an operator from $H^2(\Omega)\times H^2(\Omega)$ to $L^2(\Omega) \times L^2(\Omega)$ depends continuous on $(y,w)\in H^2(\Omega)\times H^2(\Omega)$.
\end{proof}

Semismoothness of \eqref{eq:opt_reg_potential} together with \cref{lem:newton_bound} or \cref{lem:newton_bound2} now implies local convergence of the Newton iteration; see, e.g., \cite[Theorem 8.6]{Kunisch:2008a}.
\begin{theorem}\label{theorem_ssn1}
    Under the assumptions of either \cref{lem:newton_bound} or \cref{lem:newton_bound2},
    if $(y^0,w^0)$ is sufficiently close in $H^2(\Omega)\times H^2(\Omega)$ to a solution $(y_\gamma,w_\gamma)$ to \eqref{eq:opt_reg_potential}, the semismooth Newton iteration \eqref{eq:newton_step_unif} converges superlinearly in $H^2(\Omega)\times H^2(\Omega)$ to $(y_\gamma,w_\gamma)$.
\end{theorem}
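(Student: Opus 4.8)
The plan is to deduce this from the abstract local convergence theorem for semismooth Newton methods in the form of \cite[Theorem 8.6]{Kunisch:2008a}, whose hypotheses are Newton differentiability of the residual map near the solution together with uniform bounded invertibility of its Newton derivatives. Both ingredients have essentially been assembled in the preceding text and propositions, so the task reduces to verifying that they combine consistently in the correct function-space setting. Concretely, I would write \eqref{eq:opt_reg_potential} as the zero set of the residual operator
\begin{equation*}
    F:H^2(\Omega)\times H^2(\Omega)\to L^2(\Omega)\times L^2(\Omega),\qquad
    F(y,w)=\begin{pmatrix} -\Delta w + H_\gamma(-yw)\,w + y - z\\[1ex] -\Delta y + H_\gamma(-yw)\,y - f\end{pmatrix},
\end{equation*}
so that a solution $(y_\gamma,w_\gamma)$ satisfies $F(y_\gamma,w_\gamma)=0$ and the iteration \eqref{eq:newton_step_unif} is exactly $(y^{k+1},w^{k+1})=(y^k,w^k)-D_N F(y^k,w^k)^{-1}F(y^k,w^k)$.

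The first step is to confirm Newton differentiability of $F$ from $H^2(\Omega)\times H^2(\Omega)$ to $L^2(\Omega)\times L^2(\Omega)$. The only nonsmooth ingredient is the superposition operator $H_\gamma$, which (as recalled above) is Newton differentiable from $L^r(\Omega)$ to $L^2(\Omega)$ for any $r>2$. Since $N\le 3$ yields the embedding $H^2(\Omega)\hookrightarrow C(\overline\Omega)\hookrightarrow L^r(\Omega)$ for every $r$, the argument $-yw$ and the products $H_\gamma(-yw)\,w$ and $H_\gamma(-yw)\,y$ can be treated by combining this embedding with the chain and product rules for Newton derivatives, the smooth factors $y,w$ being uniformly bounded in $L^\infty(\Omega)$ by Sobolev embedding. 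This identifies $D_N F$ with the system matrix in \eqref{eq:newton_full_pot}, hence with the operator appearing in \eqref{eq:newton_step_unif}.

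The second step is to supply uniform bounded invertibility of $D_N F$ on a neighborhood of $(y_\gamma,w_\gamma)$, which is precisely what \cref{lem:newton_bound} (under $\norm{w_\gamma}_{L^\infty(\Omega)}<\sqrt\gamma$) and \cref{lem:newton_bound2} (under \eqref{ass:h1} and \eqref{ass:h2}) provide: in either case there is a neighborhood $U(y_\gamma)\times U(w_\gamma)$ and a constant $C$ with $\norm{D_N F(y,w)^{-1}}_{\calL(L^2\times L^2,\,H^2\times H^2)}\le C$ throughout. With Newton differentiability and this uniform bound in place, \cite[Theorem 8.6]{Kunisch:2008a} produces a (possibly smaller) ball about $(y_\gamma,w_\gamma)$ from which the iterates are well defined, remain in the neighborhood, and converge q-superlinearly in $H^2(\Omega)\times H^2(\Omega)$.

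The main obstacle is not the convergence machinery, which is a direct citation once the hypotheses hold, but the compatibility of those hypotheses in a single topology: Newton differentiability of $H_\gamma$ is available only as a map into $L^2(\Omega)$ from the strictly larger space $L^r(\Omega)$, $r>2$, whereas the invertibility estimate returns the increment in $H^2(\Omega)$. Bridging this gap is exactly what the embedding $H^2(\Omega)\hookrightarrow C(\overline\Omega)$ (and thence into every $L^r(\Omega)$) supplies for $N\le 3$, and it is why the a priori regularity \eqref{eq:pot_apriori} and the resulting $L^\infty$ bounds on $y_\gamma,w_\gamma$ are used throughout. I would therefore track the domain and range of each differentiation and inversion step explicitly, so that $D_N F(y^k,w^k)^{-1}F(y^k,w^k)$ is a well-defined element of $H^2(\Omega)\times H^2(\Omega)$ at every iterate and the iteration remains inside the neighborhood furnished by \cref{lem:newton_bound} or \cref{lem:newton_bound2}.
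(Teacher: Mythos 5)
Your proposal is correct and follows essentially the same route as the paper: the paper's proof is the one-line observation that semismoothness of \eqref{eq:opt_reg_potential} (established earlier via the embedding $H^2(\Omega)\hookrightarrow L^\infty(\Omega)$, exactly as you argue) together with the uniform invertibility from \cref{lem:newton_bound} or \cref{lem:newton_bound2} allows citing \cite[Theorem 8.6]{Kunisch:2008a}. Your write-up merely makes explicit the bookkeeping of domains and ranges that the paper leaves implicit.
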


\subsubsection{Diffusion problem}

We now consider the optimization of the leading coefficient. Here we are immediately faced with the difficulty that the state equation is not closed with respect to weak convergence of $u$ in $L^2(\Omega)$ or even weak-$*$ convergence in $L^\infty(\Omega)$; in particular, we cannot expect \eqref{ass:a1} to hold. This is a classical difficulty concerning the identification of diffusion coefficients when only pointwise bounds are available.  In this respect we recall results from \cite{Murat:1977}
where, for given data $z$, and inhomogeneities $f$ and $g$, examples for non-existence of solutions to the problem
\begin{equation}
    \min_{0<u_1 \le u \le u_2} \int_\Omega |y(u)-z|^2\, dx    
    \qquad \text{s.t }\ - \nabla \cdot ( u \nabla y)=f,\quad y|_{\partial\Omega} = g,
\end{equation}
are given, as well as the notion of H- and G-convergence \cite{Murat1997}.
To address this difficulty and thus to  ensure \eqref{ass:a2}, we  propose to introduce a \emph{local} bounded smoothing operator $G:L^2(\Omega)\to L^2(\Omega)$ with the property that its restrictions satisfy $G \in \calL(L^s(\Omega), W^{1,s}(\Omega))$ and $G^*\in \calL(W^{1,s}(\Omega),W^{1,s}(\Omega))$ for $s\in (n,\infty)$ and $G(U_M)\subset U_M$. This choice of $s$ guarantees that
$W^{1,s}(\Omega)$ embeds compactly into $C(\overline \Omega)$ and that $W^{1,s}(\Omega)$ is a Banach algebra. For example, we can choose $G$ as local averaging, i.e.,
\begin{equation}
    \label{eq:rho}
    [Gu](x) = \frac{1}{|B_\rho|}\int_{B_\rho}u(x+\xi)\,d\xi,
\end{equation}
where $B_\rho$ is a ball with radius $\rho>0$ and center at the origin, and $u$ is extended by $u_1$ outside of $\Omega$.

The corresponding state equation is
\begin{equation}\label{eq:dif_state}
    \left\{\begin{aligned}
            -\nabla\cdot(Gu\, \nabla y) &= f  &\text{ in } \Omega,\\
            y &= 0 &\text{ on } \partial \Omega.
    \end{aligned}\right.
\end{equation}
We assume  that $\Omega\subset \R^N$, $N\leq 3$, is sufficiently regular such that for any $f\in L^s(\Omega)$ and any $u\in U=U_M$ defined as above, the solution to \eqref{eq:dif_state} satisfies $y\in W^{2,s}(\Omega) \cap H^1_0(\Omega)$ together with the uniform a priori estimate
\begin{equation}
    \label{eq:div_apriori}
    \norm{y}_{W^{2,s}(\Omega)} \leq C_M \norm{f}_{L^s(\Omega)}.
\end{equation}
This is the natural $W^{2,s}(\Omega)$ regularity estimate for strongly elliptic equations, see \cite[page 191]{Ladyzhenskaya:1968}. Here we use that the set $G(U_M)$ is bounded in  $W^{1,s}(\Omega)$ and hence that elements in $G(U_M)$ have a uniform modulus of continuity (which affects the constant $C_M$). Setting $S:u\mapsto y$ in \eqref{eq:dif_state} and $Y=L^2(\Omega)$, the assumptions \eqref{ass:a1} and \eqref{ass:a1} are satisfied.
Digressing for a moment, we recall that our solutions to \eqref{eq:problem} and \eqref{eq:problem_reg} still depend on $G$, and in particular in the case of \eqref{eq:rho}, they depend on $\rho$. Let us denote this dependence by $u_\rho$. Then as $\rho \to 0$, these solution converge weakly in $L^s(\Omega)$ and $G$-converge to a -- possibly different -- limit which both satisfies the constraints involved in $U$ and appears as diffusion coefficient in the state equation; see, e.g., \cite[Chapter 1.3]{Allaire:2002}.

We next turn for given $z\in L^s(\Omega)$ and any  $u\in U_M$ and $y \in W^{2,s}(\Omega)$  to the adjoint equation
\begin{equation}\label{eq:dif_adjoint}
    \left\{\begin{aligned}
            -\nabla\cdot(Gu\, \nabla w) &=  -(y - z)  &\text{ in } \Omega, \\
            w &= 0&\text{ on } \partial \Omega,
    \end{aligned}\right.
\end{equation}
whose solution $w\in W^{2,s}(\Omega)\cap H^1_0(\Omega)$ also satisfies the uniform a priori estimate \eqref{eq:div_apriori}. We note that the solutions $y$ and $w$ satisfy $\nabla y\cdot\nabla w\in W^{1,s}(\Omega)$.

Using the solution $y_\gamma$ to \eqref{eq:dif_state} for $u=u_\gamma$ and the solution $w_\gamma$ to \eqref{eq:dif_adjoint} for $u=u_\gamma$ and $y=y_\gamma$,
we can write $p_\gamma = -G^*(\nabla y_\gamma \cdot \nabla w_\gamma) \in  W^{1,s}(\Omega)$ and thus express \eqref{eq:opt_reg} equivalently as
\begin{equation}
    \left\{\begin{aligned}
            -\nabla\cdot(Gu_\gamma \nabla w_\gamma) + y_\gamma &= z,\\
            u_\gamma - H_\gamma(-G^*(\nabla y_\gamma \cdot \nabla w_\gamma)) &=0,\\
            -\nabla\cdot(Gu_\gamma \nabla y_\gamma) &= f.
    \end{aligned}\right.
\end{equation}
After eliminating $u_\gamma$ using the second equation, the reduced system has the form
\begin{equation}
    \label{eq:opt_reg_diffusion}
    \left\{\begin{aligned}
            -\nabla\cdot\left(\left(GH_\gamma(-G^*(\nabla y_\gamma \cdot \nabla w_\gamma))\right) \,\nabla w_\gamma\right) + y_\gamma &= z,\\[0.5ex]
            -\nabla\cdot\left(\left(GH_\gamma(-G^*(\nabla y_\gamma \cdot \nabla w_\gamma))\right) \,\nabla y_\gamma\right) &= f.
    \end{aligned}\right.
\end{equation}
We consider this again as an equation in $L^s(\Omega)\times L^s(\Omega)$ for $(y_\gamma,p_\gamma)\in (W^{2,s}(\Omega)\cap H^1_0(\Omega)) \times (W^{2,s}(\Omega)\cap H^1_0(\Omega))$, and interpret $H_\gamma$ as bounded linear operator from $W^{1,s}(\Omega)$ to $L^s(\Omega)$.   This renders system \eqref{eq:opt_reg_diffusion} semismooth. Appealing again to the chain rule for Newton derivatives and introducing $\chi = \chi(-G^*(\nabla y \cdot \nabla w))$, we obtain the Newton system
\begin{equation}
    \label{eq:newton_full_dif}
    \begin{multlined}[t]
        \begin{pmatrix}
            \Id + A^k(w^k,\cdot,w^k) & -\nabla\cdot\left(Gu^k\, \nabla\cdot\right)+A^k(y^k,\cdot,w^k) \\[1.5ex]
            -\nabla\cdot\left(Gu^k\,\nabla\cdot\right)+A^k(w^k,\cdot,y^k) & A^k(y^k,\cdot,y^k)
        \end{pmatrix}
        \begin{pmatrix}
            \delta y \\[1.5ex] \delta w
        \end{pmatrix}
        \\= -
        \begin{pmatrix}
            -\nabla\cdot\left(Gu^k \,\nabla w^k\right) + y^k- z\\[1.5ex]
            -\nabla\cdot\left(Gu^k \,\nabla y^k\right)  - f
        \end{pmatrix},
    \end{multlined}
\end{equation}
where we have set $u^k:=H_\gamma(-G^*(\nabla y^k \cdot \nabla w^k))$ and
\begin{equation}
    A^k(v_1,v_2,v_3) := \nabla\cdot\left(G\left(\tfrac1\gamma\chi^k G^*(\nabla v_1\cdot \nabla v_2)\right)\, \nabla v_3\right).
\end{equation}
Note that for all $y,w,\delta y,\delta w\in H^2(\Omega)$,
\begin{equation}
    \left(A^k(y,\delta y,w),\delta w\right)_{L^2(\Omega)} = \left(A^k(w,\delta w,y),\delta y\right)_{L^2(\Omega)}.
\end{equation}

It remains to provide sufficient conditions for the uniform bounded invertibility of the system matrix in \eqref{eq:newton_full_dif}. For this purpose we specify the critical set $\partial\omega_\gamma$ for the present case:
\begin{equation}
    \partial\omega_\gamma := \bigcup_{i=1}^{d-1} \set{x\in\Omega}{-G^*(\nabla y_\gamma(x)\cdot  \nabla w_\gamma(x))\in \partial Q_{i,i+1}^\gamma}.
\end{equation}

\begin{theorem}\label{theorem_ssn2}
    Let $(y_\gamma,w_\gamma)$ denote a solution to  \eqref{eq:opt_reg_diffusion}, assume that $| \partial\omega_\gamma|=0$, and that the system matrix \eqref{eq:newton_full_dif} evaluated at $(y_\gamma,w_\gamma)$ is continuous invertible as an operator from $(W^{2,s}\cap H^1_0(\Omega))^2$ to $(L^s(\Omega))^2$. Then, if $(y^0,w^0)$ is sufficiently close in $(W^{2,s}\cap H^1_0(\Omega))^2 $  to $(y_\gamma,w_\gamma)$, the semismooth Newton iteration \eqref{eq:newton_step_unif} converges superlinearly to $(y_\gamma,w_\gamma)$.
\end{theorem}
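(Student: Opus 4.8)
The plan is to follow the same two-ingredient structure as in the proof of Theorem~\ref{theorem_ssn1}. The reduced system \eqref{eq:opt_reg_diffusion} has already been argued to be semismooth as a mapping from $(W^{2,s}(\Omega)\cap H^1_0(\Omega))^2$ to $(L^s(\Omega))^2$, with generalized derivative given by the matrix \eqref{eq:newton_full_dif}; combining semismoothness with uniformly bounded invertibility of this derivative in a neighborhood of $(y_\gamma,w_\gamma)$ then yields local superlinear convergence by the abstract result \cite[Theorem~8.6]{Kunisch:2008a}. Since continuous invertibility \emph{at} $(y_\gamma,w_\gamma)$ is assumed here as a hypothesis, the only substantive task is to upgrade this to uniform invertibility on a neighborhood. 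As in Proposition~\ref{lem:newton_bound2}, I would invoke the openness of the set of boundedly invertible operators in the operator-norm topology \cite[Theorem~6.2.3]{Wouk}, so that it suffices to show that the matrix \eqref{eq:newton_full_dif}, viewed as an element of $\calL((W^{2,s}\cap H^1_0)^2,(L^s)^2)$, depends continuously on $(y,w)$ near $(y_\gamma,w_\gamma)$.

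The key step, and the main obstacle, is this operator-norm continuity of the entries of \eqref{eq:newton_full_dif}. The only non-smooth ingredient is the characteristic function $\chi=\chi(-G^*(\nabla y\cdot\nabla w))$ of the inactive set, so I would first establish that its argument $p:=-G^*(\nabla y\cdot\nabla w)$ depends continuously on $(y,w)$ in $C(\overline\Omega)$. This uses that $\nabla y,\nabla w\in W^{1,s}(\Omega)$, that $W^{1,s}(\Omega)$ is a Banach algebra (hence $\nabla y\cdot\nabla w\in W^{1,s}$), that $G^*\in\calL(W^{1,s},W^{1,s})$, and the compact embedding $W^{1,s}(\Omega)\hookrightarrow C(\overline\Omega)$. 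With $p$ continuous in $C(\overline\Omega)$, the assumption $|\partial\omega_\gamma|=0$ lets me repeat the splitting argument of Proposition~\ref{lem:newton_bound2} verbatim: decomposing $\Omega$ into the $\eps$-neighborhood $\partial\calS_\gamma^\eps$ of the jump set of $\chi$ (whose measure tends to zero as $\eps\to0$) and its complement (on which $\chi(p)=\chi(p_\gamma)$ once $\|p-p_\gamma\|_{C(\overline\Omega)}$ is small) shows that $(y,w)\mapsto\chi(-G^*(\nabla y\cdot\nabla w))$ is continuous into $L^s(\Omega)$.

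Given $L^s$-continuity of $\chi$, the remaining entries follow by combining it with the boundedness and continuity of the smooth operations. The diffusion coefficient $Gu^k=GH_\gamma(p)$ is continuous in $W^{1,s}(\Omega)$ because $H_\gamma$ is Lipschitz (so $H_\gamma(p)$ is continuous in $L^s$) and $G\in\calL(L^s,W^{1,s})$; since $W^{1,s}\hookrightarrow C(\overline\Omega)$, the principal parts $-\nabla\cdot(Gu^k\nabla\,\cdot\,)$ are then continuous in $\calL(W^{2,s},L^s)$. The lower-order bilinear terms $A^k(v_1,\cdot,v_3)$ are products of the factor $\tfrac1\gamma\chi^k$ with the maps $v\mapsto G^*(\nabla v_1\cdot\nabla v)$ and $v\mapsto G(\cdot)\,\nabla v_3$, all of which are bounded and continuous in the relevant norms by the Banach-algebra property of $W^{1,s}$ and the uniform $C(\overline\Omega)$-bounds on $\nabla y,\nabla w$; hence their operator-norm continuity reduces to the $L^s$-continuity of $\chi^k$ already established. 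Collecting these facts, the matrix \eqref{eq:newton_full_dif} is operator-norm continuous in $(y,w)$, so it remains boundedly invertible with a uniform bound on the inverse throughout a neighborhood of $(y_\gamma,w_\gamma)$, and the abstract semismooth Newton theorem then delivers the asserted local superlinear convergence.
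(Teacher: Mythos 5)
Your proposal is correct and follows essentially the same route as the paper's own proof: reduce to operator-norm continuity of the Newton matrix in $(y,w)$ and invoke openness of the set of invertible operators, establish $L^s$-continuity of $\chi(-G^*(\nabla y\cdot\nabla w))$ by the splitting argument of \cref{lem:newton_bound2} using $|\partial\omega_\gamma|=0$, and then handle the remaining terms via the Banach-algebra property of $W^{1,s}(\Omega)$ and the mapping properties of $G$ and $G^*$. If anything, you are slightly more explicit than the paper in treating the continuity of the principal parts $-\nabla\cdot(Gu^k\nabla\,\cdot\,)$ through the Lipschitz continuity of $H_\gamma$, which the paper subsumes under ``we argue similarly.''
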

\begin{proof} It suffices to argue that the system matrix depends continuously on $(y,w)\in (W^{2,s}(\Omega)\cap H^1_0(\Omega))^2$ in a neighborhood of $(y_\gamma,w_\gamma)$ considered as operators in $\mathcal{L}((W^{2,s}(\Omega)\cap H^1_0(\Omega))^2,L^s(\Omega)^2)$. For this purpose we consider the operator
    \begin{equation}
        (W^{2,s}(\Omega)\cap H^1_0(\Omega))^2\ni(y,w)\mapsto A(w,\cdot,w)\in \mathcal{L}(W^{2,s}(\Omega)\cap H^1_0(\Omega),L^s(\Omega)),
    \end{equation}
    where $A$ still depends on $\chi = \chi(-G^*(\nabla y \cdot \nabla w))$.  First  we argue exactly as in the proof of \cref{lem:newton_bound2} that
    \begin{equation}
        (W^{2,s}(\Omega)\cap H^1_0(\Omega))^2\ni (y,w)\mapsto \chi=\chi(-G^*(\nabla y \cdot \nabla w))\in L^s(\Omega)
    \end{equation}
    is continuous. Next we observe that
    \begin{equation}
        W^{2,s}(\Omega)\cap H^1_0(\Omega)\ni w \mapsto G^*(\nabla w \cdot \nabla \cdot ) \in \mathcal{L}(W^{2,s}(\Omega)\cap H^1_0(\Omega),W^{1,s}(\Omega))
    \end{equation}
    is continuous, and consequently
    \begin{equation}
        (W^{2,s}(\Omega)\cap H^1_0(\Omega))^2\ni(y,w)\mapsto G(\tfrac{1}{\gamma}\chi G^*(\nabla w \cdot \nabla \cdot )) \in \mathcal{L}(W^{2,s}(\Omega)\cap H^1_0(\Omega),L^{s}(\Omega))
    \end{equation}
    is continuous as well. From here we can conclude that
    $(y,w)\mapsto A(w,\cdot,w)$ is continuous from $(W^{2,s}(\Omega)\cap H^1_0(\Omega))^2$ to $\mathcal{L}((W^{2,s}(\Omega)\cap H^1_0(\Omega)),L^s(\Omega))$. We argue similarly for $A(w,\cdot,y)$, $A(y,\cdot,w)$ and $A(y,\cdot,y)$, which establishes the claim.
\end{proof}

Returning to the assumption on the well-posedness of the system matrix at $(y_\gamma, w_\gamma)$, we now argue that this is indeed the case if $w_\gamma$ is sufficiently small in the $W^{2,s}(\Omega)$ norm, i.e., for small residual problems. For $w=0$, the system matrix in \eqref{eq:newton_full_dif} has the form
\begin{equation}
    \begin{pmatrix}
        \Id  & -\nabla\cdot\left(u_1\, \nabla\cdot\right) \\[1.5ex]
        -\nabla\cdot\left(u_1\,\nabla\cdot\right) & 0
    \end{pmatrix}
\end{equation}
since $u_\gamma = GH_\gamma(0) = Gu_1 = u_1$ because $G u = u$ for $u$ constant.
This operator is clearly continuously invertible. A perturbation argument as in the proof of \cref{theorem_ssn2} implies continuous invertibility also for $(y_\gamma, w_\gamma)$ if $\norm{w_\gamma}_{W^{2,s}(\Omega)}$ is sufficiently small.

\section{Numerical examples}
\label{sec:examples}

We illustrate the behavior of the proposed approach with numerical examples modeling a simple material design problem for the potential and the diffusion equation, in which a reference binary material distribution $u_r$ (i.e., using only two values: matrix or void, and material) has already been obtained. The goal is now to obtain a comparable behavior using additionally available materials of intermediate density (and hence presumably lower cost) by solving the multi-material optimization problem \eqref{eq:problem} with target $z=y_r$ (the solution to the state equation corresponding to the reference coefficient $u_r$) and an extended list $u_b$ of feasible material parameters containing the two original values. Here, the tracking term $\calF$ penalizes the deviation from the reference state, while the ``multi-bang'' term $\calG$ both promotes the desired discrete structure and favors materials with lower density; the trade-off between the two goals is controlled by the parameter $\alpha$. We point out that not strictly enforcing attainment of the target allows parameter distributions that are different from the original binary distribution (which is only recovered in the limit $\alpha\to 0$).
For each example, we report on the deviation from the reference state as well as on the achieved total material cost reduction (as measured by the difference of the $L^2$ norms of the reference and computed coefficients).

The multi-material optimization problem \eqref{eq:problem} is solved using the described regularized semismooth Newton method. To address the local convergence of Newton methods and to avoid having to choose the Moreau--Yosida regularization parameter $\gamma$ a priori, a continuation strategy is applied where the problem is solved starting with a large $\gamma^0=1$ and the initial guess $(y_0,p_0)=(0,0)$. The regularization parameter is then successively reduced via $\gamma^{k+1}=\gamma^k/2$, taking the previous solution as a starting point. The iteration is terminated if $\gamma=10^{-12}$ is reached or more than $300$ Newton iterations are performed. This is combined with a non-monotone backtracking line seach based on the residual of the optimality system \eqref{eq:opt_reg}, starting with a step length of $1$ and using a reduction factor of $1/2$, where a minimal step length of $10^{-6}$ is accepted even if it leads to a (small) increase in the residual norm. 
The partial differential equations are discretized using finite differences on a uniform grid of $128\times 128$ grid points. Our Matlab implementation of the described algorithm can be downloaded from \url{https://github.com/clason/multimaterialcontrol}.

\subsection{Potential problem}

\begin{figure}[t]
    \centering
    \subcaptionbox{reference coefficient $u_r$ \label{fig:pot:true}}%
    {\includegraphics[width=0.495\textwidth]{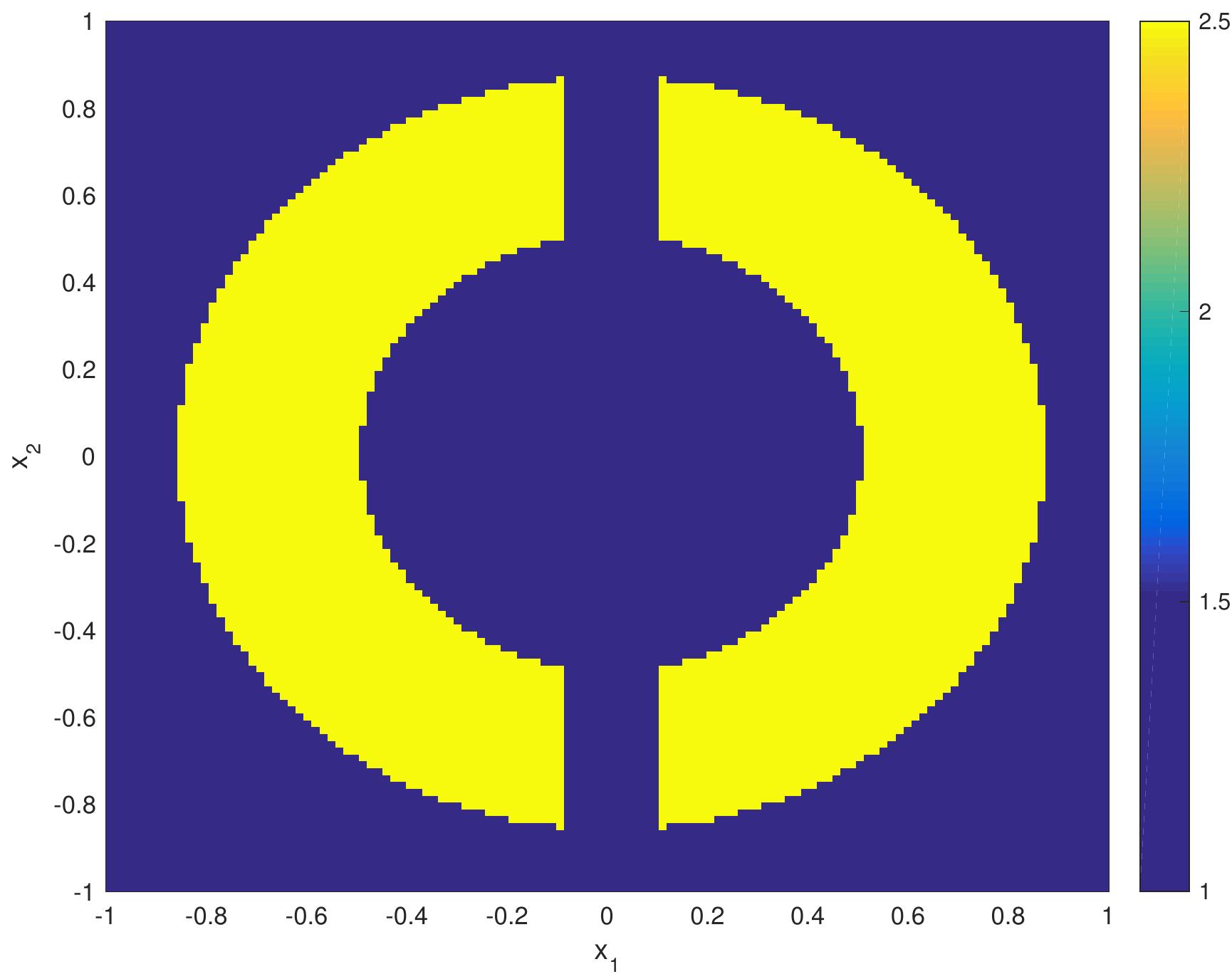}}
    \hfill
    \subcaptionbox{optimal coefficient $u_\gamma$ for $\alpha=10^{-5}$\label{fig:pot:control5}}%
    {\includegraphics[width=0.495\textwidth]{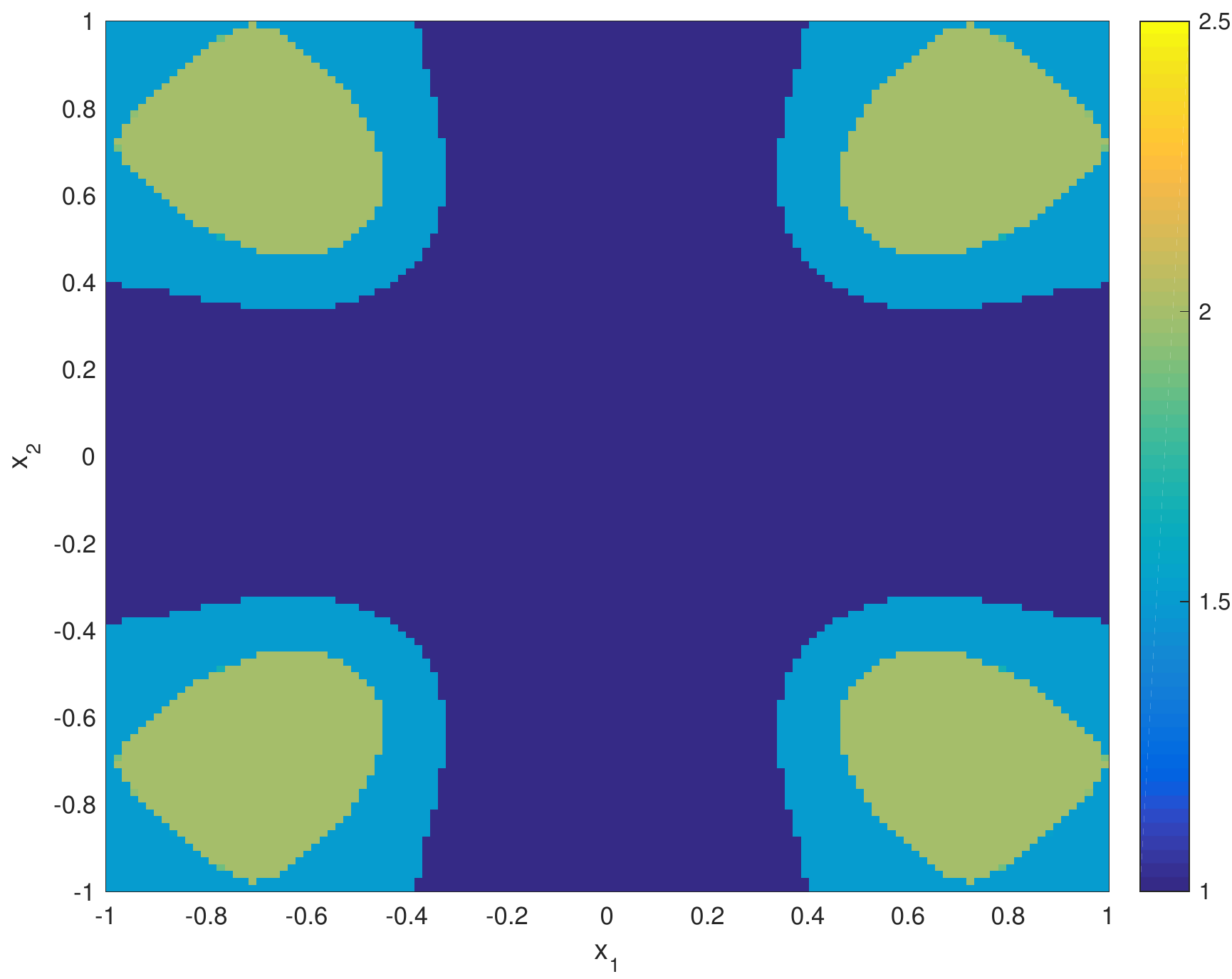}}

    \subcaptionbox{optimal coefficient $u_\gamma$ for $\alpha=10^{-6}$\label{fig:pot:control6}}%
    {\includegraphics[width=0.495\textwidth]{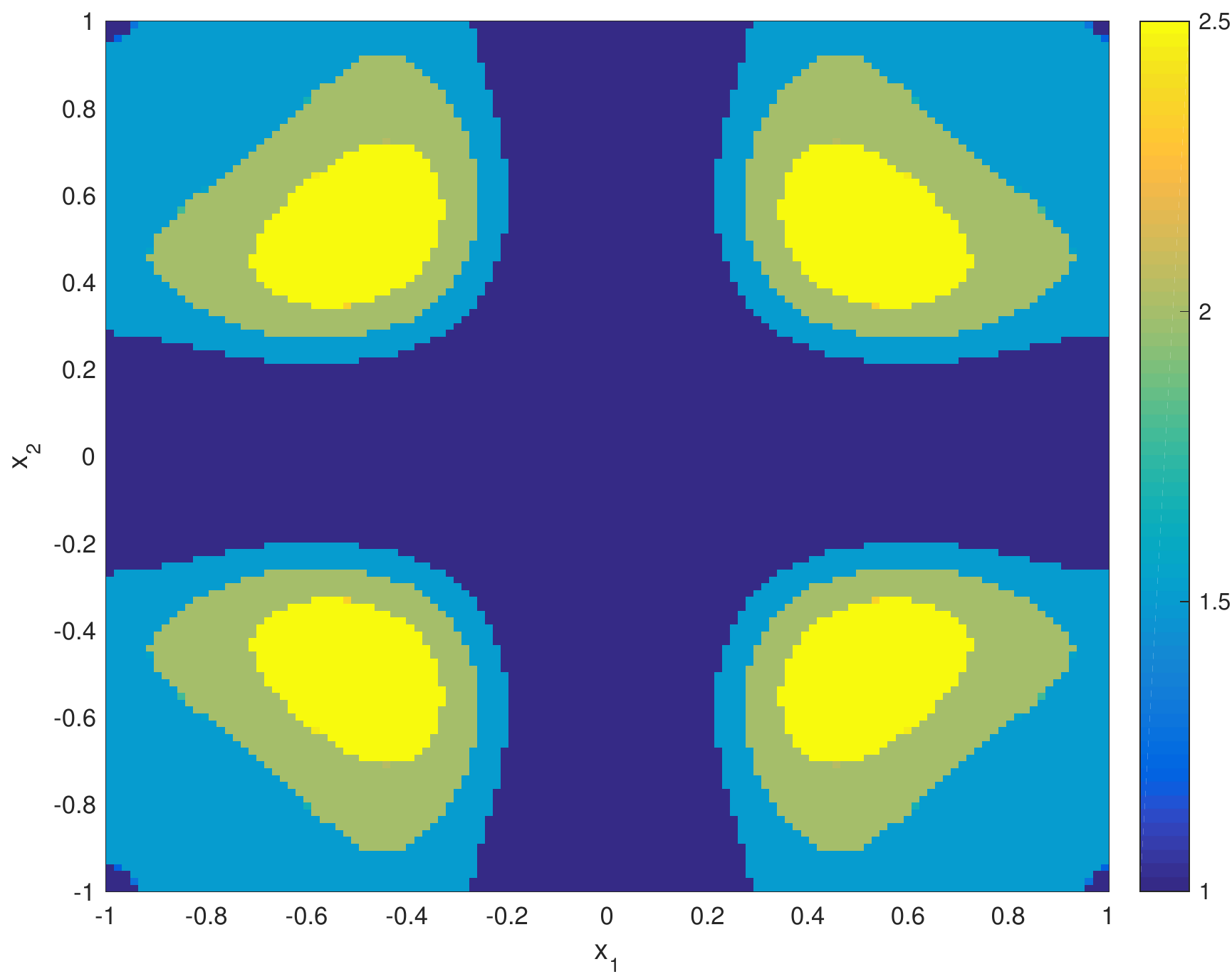}}
    \hfill
    \subcaptionbox{optimal coefficient $u_\gamma$ for $\alpha=10^{-7}$\label{fig:pot:control7}}%
    {\includegraphics[width=0.495\textwidth]{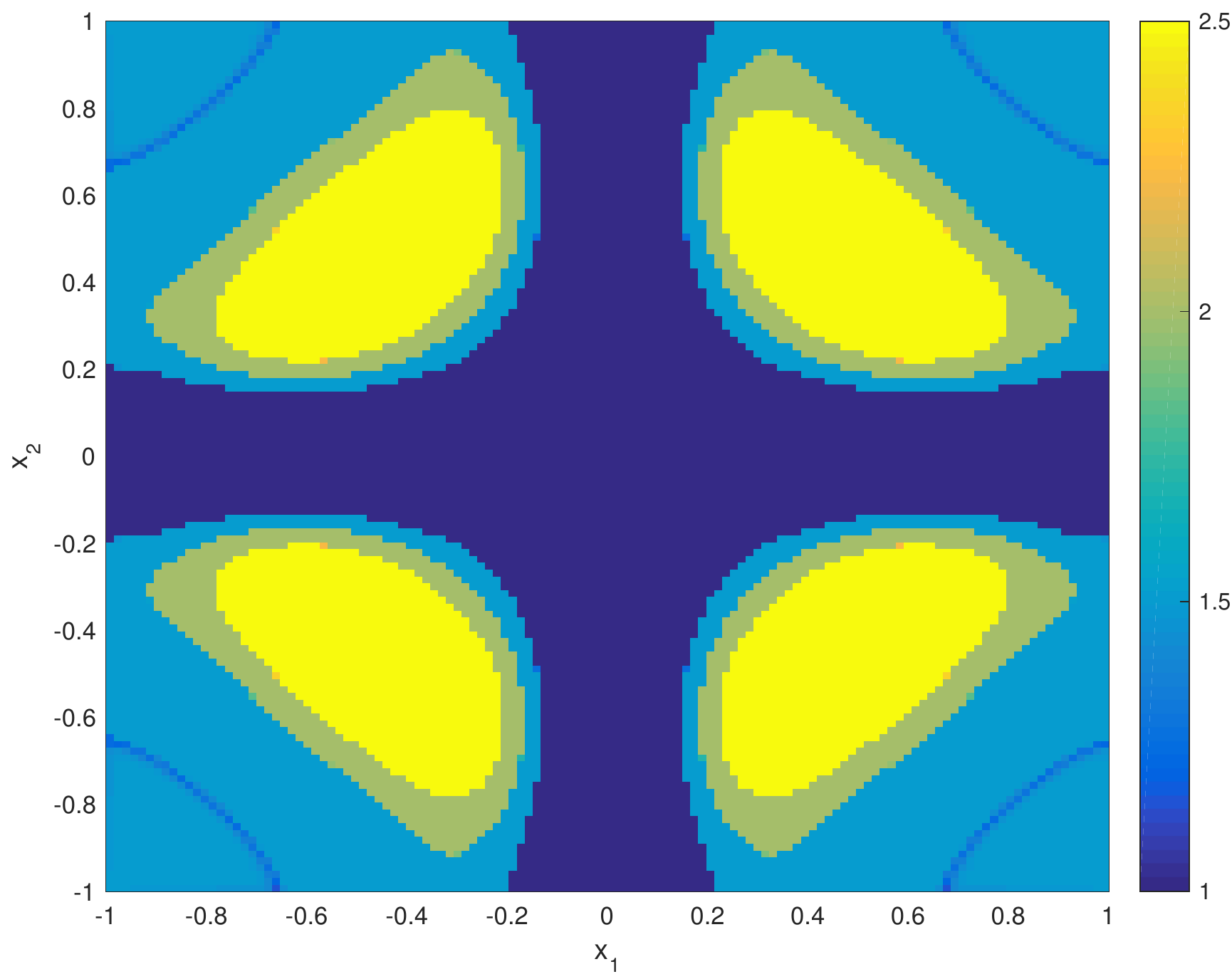}}
    \caption{Results for potential problem}
    \label{fig:pot}
\end{figure}
We first consider the design problem associated with equation \eqref{eq:pot_state}, where we fix $\Omega=[-1,1]^2$ and
\begin{equation}
    f(x_1,x_2) = \sin(\pi x_1)\cos(\pi x_2).
\end{equation}
The reference material parameter is
\begin{equation}
    \label{eq:reference}
    u_r (x_1,x_2) = \begin{cases}
        2.5 & \text{if } 1/4 < |x|^2 < \tfrac34 \text{ and } x_1>\tfrac{1}{10},\\
        2.5 & \text{if } 1/4 < |x|^2 < \tfrac34 \text{ and } x_1<-\tfrac{1}{10},\\
        1.5 & \text{else},
    \end{cases}
\end{equation}
see \cref{fig:pot:true}. We then solve the multi-material design problem for the target $z=y_r$ with the extended feasible parameter set $\{1,1.5,2,2.5\}$ for different values of $\alpha$ using the described algorithm.
In all cases, after some initial reduced steps were taken for $\gamma < 5\cdot 10^{-5}$, the Newton iteration entered a superlinear phase and converged after at most three iterations. Depending on $\gamma$, the total number of Newton iterations was between $5$ and $28$.
The algorithm always terminated at $\gamma\approx 10^{-12}$ because the minimal value of $\gamma$ was reached. 
The final material distributions $u_\gamma$ for $\alpha \in\{10^{-5},10^{-6},10^{-7}\}$ are shown in \cref{fig:pot:control5}--\subref{fig:pot:control7}. As can be seen, at almost all points, only the feasible parameter values are attained, where lower values of $\alpha$ lead to increased use of higher density materials. The relative tracking error $e_T:=\norm{y_\gamma-y_r}_{L^2}/\norm{y_r}_{L^2}$ as well as the relative total material cost reduction $e_M:=(\norm{u_r}_{L^2}-\norm{u_\gamma}_{L^2})/\norm{u_r}_{L^2}$ for each value of $\alpha$ are given in \cref{tab:pot}.

\subsection{Diffusion problem}

\begin{figure}[t]
    \centering
    \subcaptionbox{reference coefficient $Gu_r$ \label{fig:diff:true}}%
    {\includegraphics[width=0.495\textwidth]{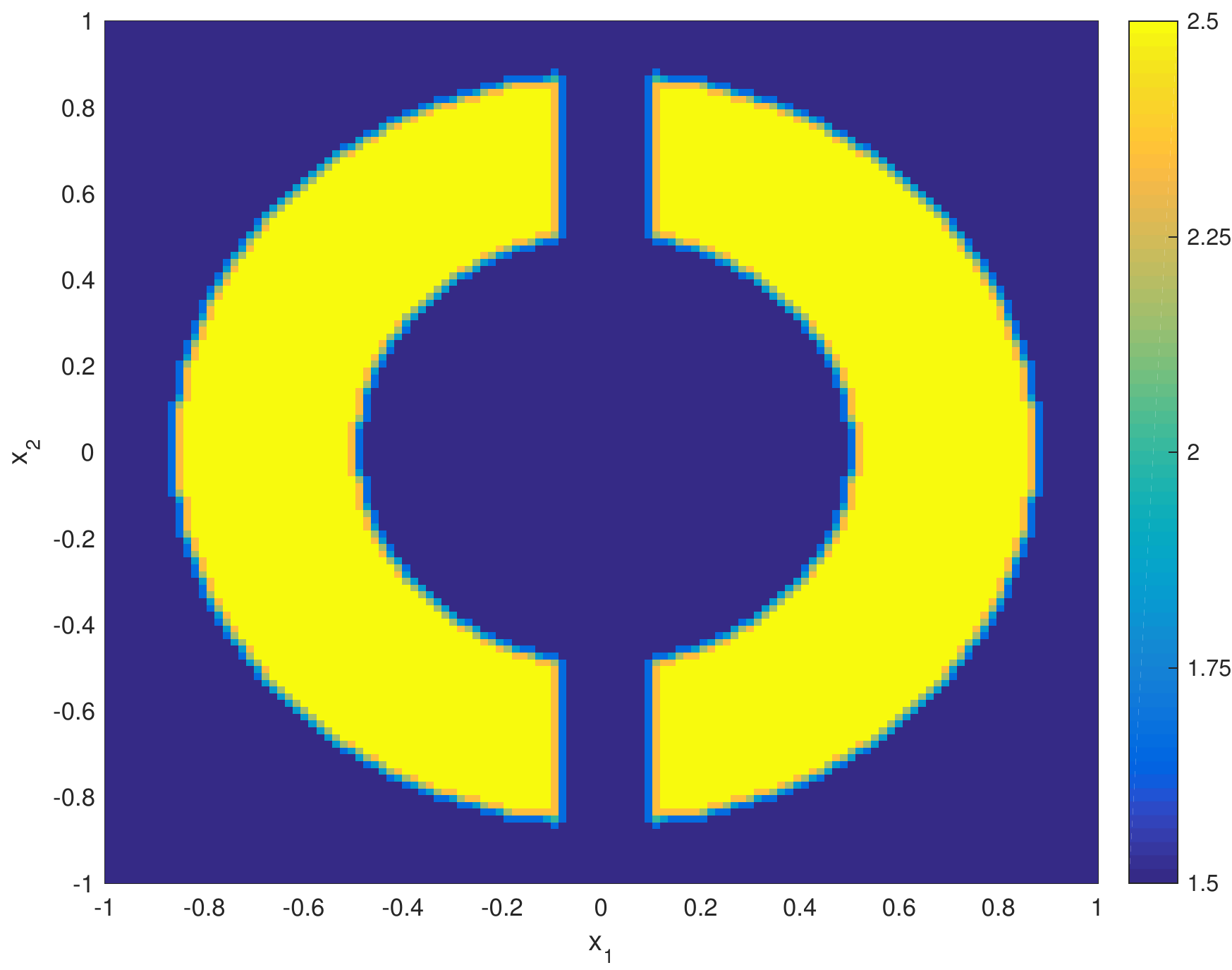}}
    \hfill
    \subcaptionbox{optimal coefficient $Gu_\gamma$ for $\alpha=10^{-2}$\label{fig:diff:control2}}%
    {\includegraphics[width=0.495\textwidth]{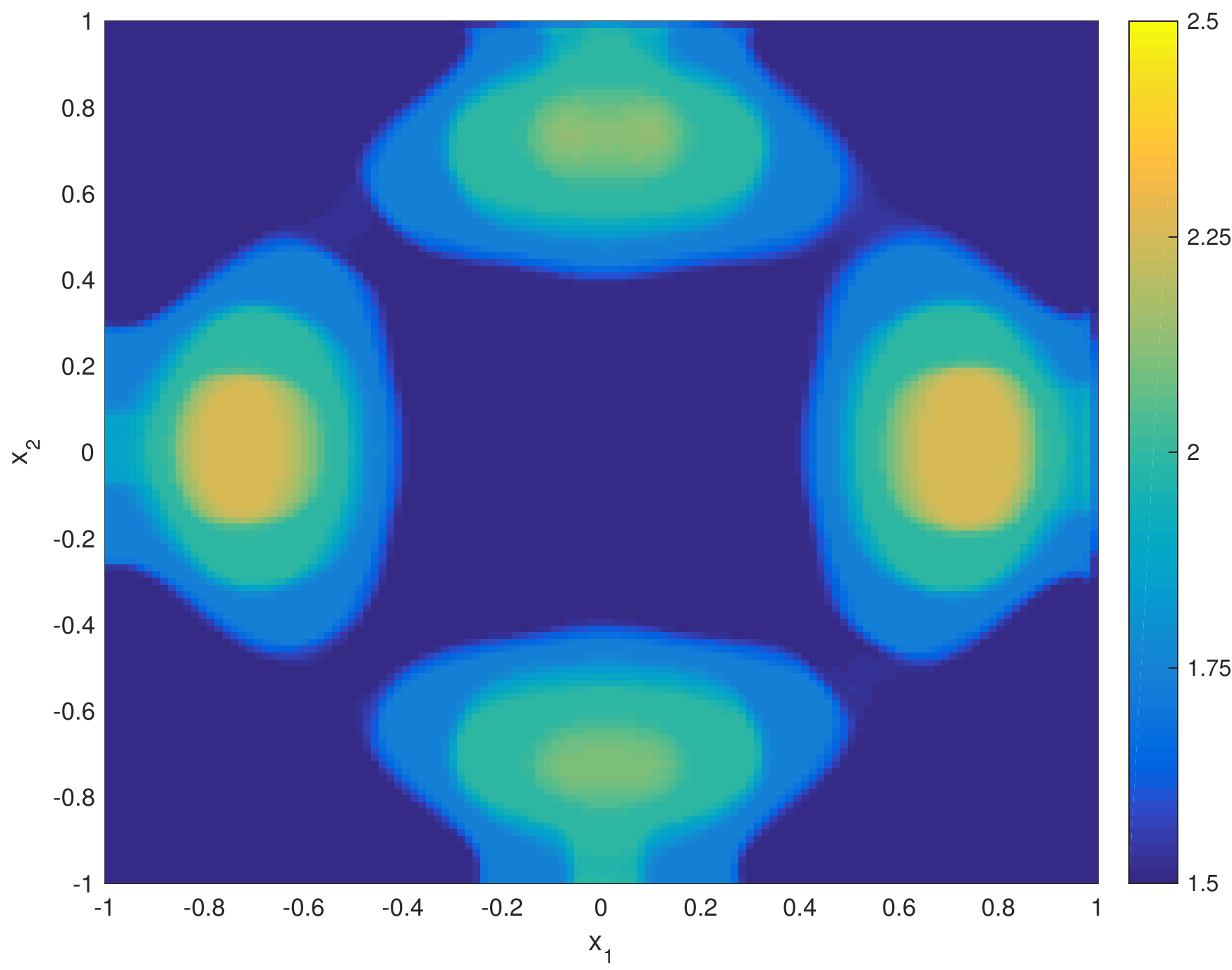}}

    \subcaptionbox{optimal coefficient $Gu_\gamma$ for $\alpha=10^{-3}$\label{fig:diff:control3}}%
    {\includegraphics[width=0.495\textwidth]{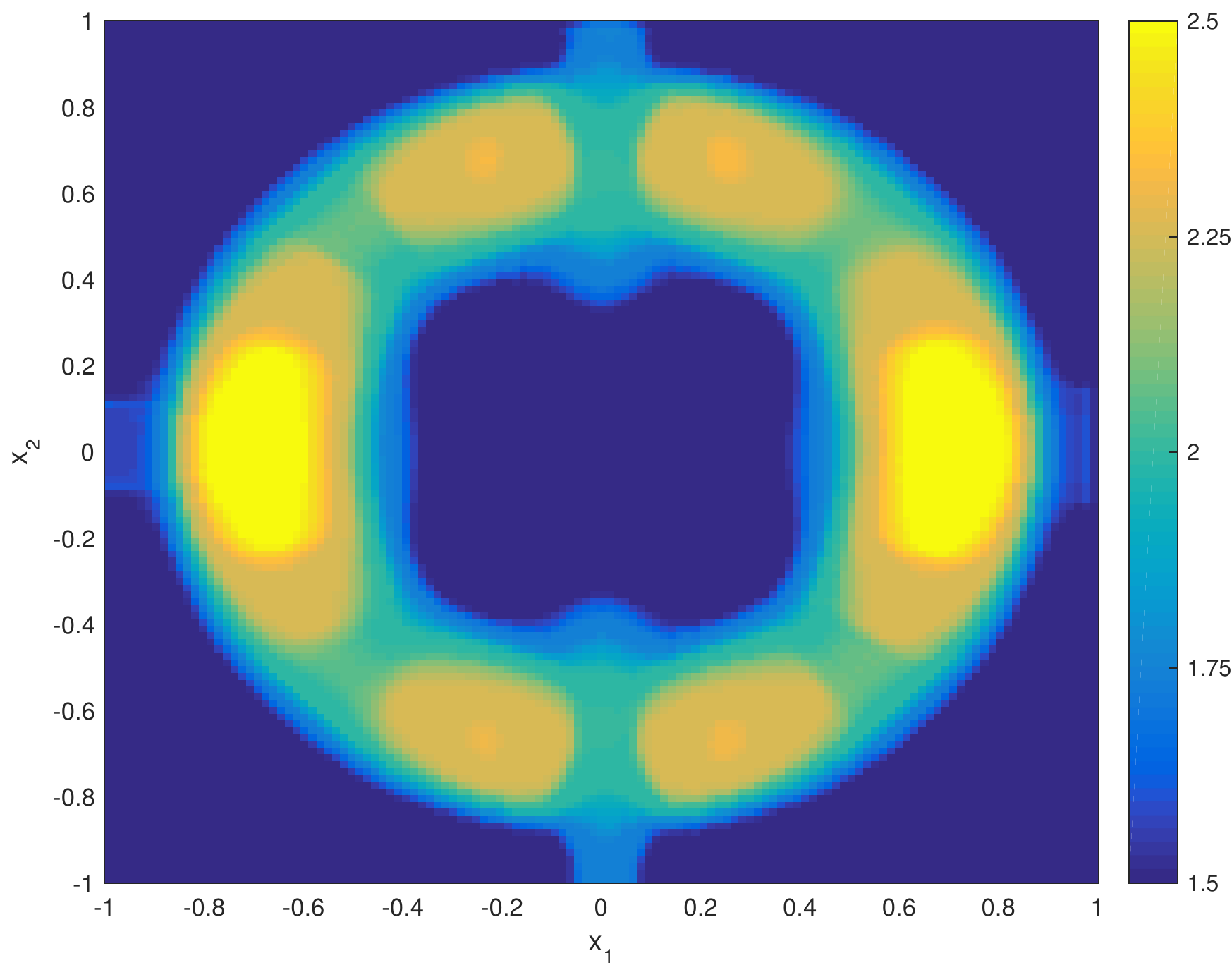}}
    \hfill
    \subcaptionbox{optimal coefficient $Gu_\gamma$ for $\alpha=10^{-6}$\label{fig:diff:control6}}%
    {\includegraphics[width=0.495\textwidth]{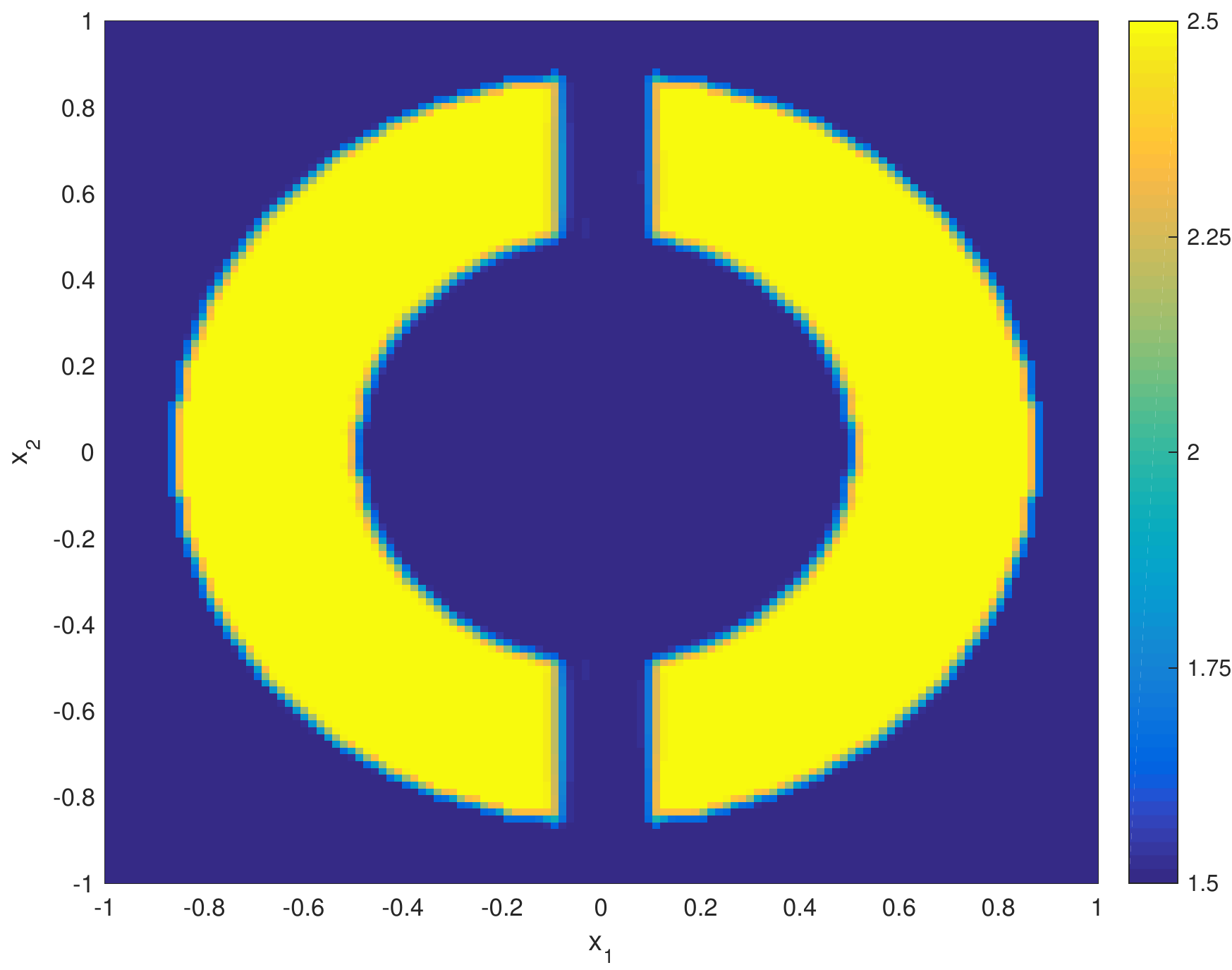}}
    \caption{Results for diffusion problem}
    \label{fig:diff}
\end{figure}
For the design problem associated with equation \eqref{eq:dif_state}, we set $f\equiv 10$ and $u_r$ as given in \eqref{eq:reference}. The smoothing operator $G$ is taken as averaging over the local five-point stencil; the smoothed reference coefficient $Gu_r$ is shown in \cref{fig:diff:true} to facilitate comparison.
For the multimaterial design problem, we choose the extended feasible parameter set $\{1.5,1.75,2,2.25,2.5\}$ and $\alpha \in\{10^{-2},10^{-3},10^{-6}\}$ (the last value to illustrate the behavior for $\alpha\to 0$).
In these cases, the algorithm terminated prematurely due to reaching the maximal number of Newton iterations at $\gamma^* \approx 4.8\cdot10^{-7}$, $\gamma^* \approx 6.0\cdot10^{-8}$, and $\gamma^* \approx 9.3\cdot10^{-10}$, respectively. The behavior of the Newton method is similar as in the potential problem, although the required number of Newton iterations now increases significantly as $\gamma$ is decreased due to the line search leading to smaller step lengths (including, e.g., for $\alpha=10^{-3}$ in total six non-monotone steps due to the minimal step length being reached).
The corresponding material coefficients $Gu_\gamma$ from the last successful iteration at $\gamma = 2\gamma^*$ are shown in \cref{fig:diff:control2}--\subref{fig:diff:control6}. Although the multi-bang structure is no longer perfect, it can be observed that the penalty is successful in promoting the desired parameter values even in the presence of the smoothing operator $G$. \Cref{fig:diff:control6} also indicates that the original binary reference distribution $u_r$ is recovered for $\alpha\to 0$.
Finally, the relative tracking errors and relative material cost reductions for these values of $\alpha$ are given in \cref{tab:dif}.
\begin{table}[t]
    \captionabove{Relative tracking error $e_T$ and material cost reduction $e_M$ for different values of $\alpha$}\label{tab:res}
    \begin{subtable}{0.5\textwidth}
        \caption{Potential problem\label{tab:pot}}            \begin{tabular}{lccc}
            \toprule
            $\alpha$ & $10^{-5}$ & $10^{-6}$ & $10^{-7}$ \\
            \midrule
            $e_T$ & $2.95\cdot10^{-2}$ & $8.28\cdot10^{-3}$ & $2.01\cdot10^{-3}$\\
            $e_M$ & $2.89\cdot10^{-1}$ & $1.82\cdot 10^{-1}$ & $1.10\cdot10^{-1}$\\
            \bottomrule
        \end{tabular}
    \end{subtable}\begin{subtable}{0.5\textwidth}
        \caption{Diffusion problem\label{tab:dif}}            \begin{tabular}{lccc}
            \toprule
            $\alpha$ & $10^{-1}$ & $10^{-2}$ & $10^{-6}$ \\
            \midrule
            $e_T$ & $4.96\cdot10^{-2}$ & $1.15\cdot10^{-2}$ & $5.29\cdot10^{-5}$\\
            $e_M$ & $1.16\cdot10^{-2}$ & $4.61\cdot 10^{-1}$ & $7.29\cdot10^{-4}$\\
            \bottomrule
        \end{tabular}
    \end{subtable}
\end{table}

\section{Conclusion}

A convex analysis approach is presented for the determination of piecewise constant coefficients in a partial differential equation where the constants range over a predetermined discrete set. Since the subdomains where the coefficient is constant are not specified a priori, this constitutes a topology optimization problem. 
Two model applications are analyzed in detail. For the case where the unknown coefficient enters into the potential term, the numerical results are very encouraging. If the unknown parameter enters into the diffusion term, regularization is required that has a smoothing effect on the solutions, and thus the numerical results are less ``crisp''. In practice, this could be addressed by a post-processing step, either by standard thresholding  or by evaluating the unregularized subdifferential at the computed optimal dual variable, i.e., taking an appropriate selection $\tilde u \in \partial\calG^*(p_\gamma)$.
Since the considered problems resemble inverse coefficient problems, it comes
as no surprise that the diffusion problem is more ill-posed than the potential problem. 

In future work, we plan to return to the diffusion problem and to formulate the multi-topology optimization problem based on a bounded variation framework using a functional including the total variation seminorm. It may also be of interest to search for other types of functionals which serve the purpose of multi-material topology optimization. 
In particular, we note that the currently used formulation in \eqref{eq:formal_prob} favors values $u(x)=u_i$ with small magnitude over other ones. Depending on the practical relevance of the $u_i$, this may not be a desired effect. In this case, functionals should be constructed that favor different criteria (e.g., the weight or the price of different materials) while still keeping the ``multi-bang'' property feature of promoting controls with values only from the given set.

\section*{Acknowledgment}
Support by Austrian Science Fund (FWF) under grant SFB {F}32 (SFB ``Mathematical Optimization and Applications in Biomedical Sciences'') is gratefully acknowledged.

\printbibliography

\end{document}